\theoremstyle{definition}
\newtheorem{defn}{Definition}[section]
\newtheorem{thm}[defn]{Theorem}
\newtheorem{tvr}[defn]{Proposition}
\theoremstyle{remark}
\newtheorem{example}{Example}[section]
\newlength{\defbaselineskip}
\newcommand{\setlinespacing}[1]%
           {\setlength{\baselineskip}{#1 \defbaselineskip}}
\newcommand{\map}{\rightarrow}
\newcommand{\q}{\quad}
\renewcommand{\epsilon}{\varepsilon}
\newcommand{\ep}{\varepsilon}
\newcommand{\la}{\lambda}
\newcommand{\al}{\alpha}
\newcommand{\om}{\omega}
\renewcommand{\rho}{\varrho}
\renewcommand{\phi}{\varphi}
\newcommand{\R}{{\mathbb{R}}}
\newcommand{\N}{{\mathbb N}}
\newcommand{\Z}{\mathbb{Z}}
\newcommand{\C}{\mathbb{C}}
\newcommand{\set}[2]{\left\{#1 \, |\, #2 \right\}}
\newcommand{\setb}[2]{\left\{#1 \, \mid\, #2 \right\}}
\newcommand{\abs}[1]{\left\vert#1\right\vert}
\newcommand{\wt}{\widetilde}
\newcommand{\sca}[2]{\langle #1,\, #2\rangle}
\newcommand{\comb}[2]{\begin{pmatrix}
     #1\\
     #2
  \end{pmatrix}}
\begin{document}

\title[Cubature formulas from symmetric orbit functions]
{Cubature formulas of multivariate polynomials arising from symmetric orbit functions}

\author[J. Hrivn\'{a}k]{Ji\v{r}\'{i} Hrivn\'{a}k$^{1}$}
\author[L. Motlochov\'{a}]{Lenka Motlochov\'{a}$^{1,3}$}
\author[J. Patera]{Ji\v{r}\'{i} Patera$^{2,3,4}$}

\date{\today}
\begin{abstract}\small
The paper develops applications of symmetric orbit functions, known from irreducible representations of simple Lie groups, in numerical analysis. It is shown that these functions have remarkable properties which yield to cubature formulas, approximating a weighted integral of any function by a weighted finite sum of function values, in connection with any simple Lie group. The cubature formulas are specialized for simple Lie groups of rank two. An optimal approximation of any function by multivariate polynomials arising from symmetric orbit functions is discussed.
\end{abstract}

\maketitle
\noindent
$^1$ Department of Physics, Faculty of Nuclear Sciences and Physical Engineering, Czech Technical University in Prague, B\v{r}ehov\'a~7, CZ-115 19 Prague, Czech Republic\\
$^2$ Centre de recherches math\'ematiques, Universit\'e de Montr\'eal, C.~P.~6128 -- Centre ville, Montr\'eal, H3C\,3J7, Qu\'ebec, Canada\\
$^3$ D\'epartement de math\'ematiques et de statistique, Universit\'e de Montr\'eal, Qu\'ebec, Canada\\
$^4$ MIND Research Institute, 111~Academy Drive, Irvine, CA 92617
\vspace{10pt}

\noindent
E-mail: jiri.hrivnak@fjfi.cvut.cz, lenka.motlochova@fjfi.cvut.cz, patera@crm.umontreal.ca

\bigskip

\noindent
Keywords: cubature formulas, symmetric orbit functions, simple Lie groups, Weyl groups

\smallskip

\noindent
MSC: 65D32, 33C52, 41A10, 22E46, 20F55, 17B22

\section{Introduction}

The purpose of this paper is to extend the results of \cite{MP4,SsSlcub}, where cubature formulas for numerical integration connected with three types of multivariate Chebyshev-like polynomials arising from Weyl group orbit functions are developed. The specific goal of this article is to derive the cubature rules and corresponding approximation methods for the family of the polynomials arising from symmetric exponential Weyl group orbit sums \cite{Bour,KP1} and detail specializations of the general results for the two-variable polynomials.

The family of polynomials induced by the symmetric Weyl group orbit functions ($C-$functions) forms one of the most natural generalizations of the classical Chebyshev polynomials of one variable --- indeed, the lowest symmetric orbit function arising from the Weyl group of $A_1$ coincides with the common cosine function of one variable and thus induces the family of Chebyshev polynomials of the first kind \cite{Riv}. The continuous and discrete orthogonality of the sets of cosine functions $\cos (nx)$ generalize to the families of multivariate $C-$functions \cite{HP,KP1,MP2}.  This generalization serves as an essential starting point for deriving the cubature formulas and approximation methods.

Cubature formulas for numerical integration constitute multivariate generalizations of classical quadrature formulas for functions of one variable. A weighted integral over some domain inside $\R^n$ of any given function is estimated by a finite weighted sum of values of the same function on a specific set of points (nodes). A standard requirement is imposed: the cubature formula has to hold as an exact equality for polynomials up to a certain degree. Numerous types of cubature formulas with diverse shapes of the integration domains and various efficiencies exist \cite{Cools}. The efficiency of a given cubature formula reflects how the achieved maximal degree of the polynomials relates to the number of the necessary nodes. Optimal cubature formulas of the highest possible efficiency (Gaussian formulas) are for multivariate functions obtained for instance in \cite{LX,MP4,XuC2}. 

The sequence of Gaussian cubature formulas derived in \cite{LX} arises from  the antisymmetric orbit functions ($S-$functions) of the Weyl groups of type $A_n$, $n\in \N$. Generalization of these cubature formulas from \cite{LX} to polynomials of the $S-$functions of Weyl groups of any type and rank is achieved in \cite{MP4}. A crucial concept, which allows the generalization of the $A_n$ formulas, is a novel definition of a degree of the underlying polynomials. This generalized degree ($m-$degree) is based on invariants of the Weyl groups and their corresponding root systems. 
Besides the polynomials corresponding to the $C-$ and $S-$functions, two additional families of multivariate polynomials arise from Weyl group orbit functions of mixed symmetries \cite{SsSlcub}. These hybrid orbit functions ($S^s-$ and $S^l-$functions) exist only for root systems of Weyl groups with two different lengths of roots --- $B_n$, $C_n$, $F_4$ and $G_2$. The cubature formulas related to the polynomials of these $S^s-$ and $S^l-$functions are developed in \cite{SsSlcub}. Deduction of the remaining cubature formulas, which correspond to the polynomials of the $C-$functions, completes in this paper the results of \cite{LX,MP4, SsSlcub}. The integration domains and nodes of these cubature formulas are constructed in a similar way as one-dimensional Gauss-Chebyshev formulas and their Chebyshev nodes.    

Instead of a classical one-dimensional interval, the multivariate $C-$functions are considered in the fundamental domain of the affine Weyl group --- a simplex $F\subset \R^n$. The discrete orthogonality relations of $C-$functions are performed over a finite fragment of a grid $F_M\subset F$, with the parameter $M\in \N$ controlling the density of $F_M$ inside $F$. The simplex $F$ together with the set of points $F_M$ have to be transformed via a transform which induces the corresponding family of polynomials ($X-$transform). This process results in the integration domain $\Omega$ of non-standard shape and the set of nodes $\Omega_M$, with specifically distributed points inside $\Omega$. The last ingredient, needed for successful practical implementation, is the explicit form of the weight polynomial $K$. For practical purposes, the explicit construction of all two-variable cases is presented.

Except for direct numerical integration, one of the most immediate applications of the developed cubature formulas is related multivariate polynomial approximation \cite{CMV2}. The Hilbert basis of the orthogonal multivariate polynomials induced by the $C-$ functions guarantees that any function from the corresponding Hilbert space is expressed as a series involving these polynomials. A specific truncated sum of this expansion provides the best approximation of the function by the polynomials. Among other potential applications of the developed cubature formulas are calculations in  fluid flows \cite{CAB}, laser optics \cite{CTB}, stochastic dynamics \cite{XCL}, magnetostatic modeling \cite{YGA}, micromagnetic simulations \cite{CF}, electromagnetic wave propagation \cite{SMCSHMM}, liquid crystal colloids \cite{TST} and quantum dynamics \cite{LN}. 

The paper is organized as follows. In Section \ref{secroot}, notation and pertinent properties of Weyl groups, affine Weyl groups and $C-$functions are reviewed. In Section \ref{seccub}, the cubature formulas related to $C-$functions are deduced.  In Section \ref{examplecub}, the explicit cubature formulas of the rank two cases $A_2$, $C_2$, $G_2$ are constructed. In Section \ref{secpol}, polynomial approximation methods are developed.

\section{Root systems and polynomials}\label{secroot}
\subsection{Pertinent properties of root systems and weight lattices}\

The notation, established in \cite{HP}, is used. Recall that, to the Lie algebra of the compact, connected, simply connected simple Lie group $G$ of rank $n$, corresponds the set of simple roots $\Delta=(\al_1,\dots,\al_n)$ \cite{Bour, BB,H2,VO}. The set $\Delta$ spans the Euclidean space $\R^n$, with the scalar product denoted by $\sca{\,}{\,}$.
The following standard objects related to the set of simple roots $\Delta$ are used:
\begin{itemize}
\item
The marks $m_1,\dots,m_n$ of the highest root $\xi\equiv -\al_0=m_1\al_1+\dots+m_n\al_n$.

\item
The Coxeter number $m=1+m_1+\dots+m_n$ of $G$.

\item
The Cartan matrix $C$ and its determinant
\begin{equation}\label{Center}
 c=\det C.
\end{equation}

\item
The root lattice $Q=\Z\al_1+\dots+\Z\al_n $.

\item
The $\Z$-dual lattice to $Q$,
\begin{equation*}
 P^{\vee}=\set{\om^{\vee}\in \R^n}{\sca{\om^{\vee}}{\al}\in\Z,\, \forall \al \in \Delta}=\Z \om_1^{\vee}+\dots +\Z \om_n^{\vee}
\end{equation*}
 with the vectors $ \om^\vee_i$ given by $$ \sca{\om^\vee_i}{\al_j}=\delta_{ij}.$$

\item
The dual root lattice $Q^{\vee}=\Z \al_1^{\vee}+\dots +\Z \al^{\vee}_n$, where $\al^{\vee}_i=2\al_i/\sca{\al_i}{\al_i}$.

\item
The dual marks $m^{\vee}_1, \dots ,m^{\vee}_n$ of the highest dual root $\eta\equiv -\al_0^{\vee}= m_1^{\vee}\al_1^{\vee} + \dots + m_n^{\vee} \al_n^{\vee}$. The marks and the dual marks are summarized in Table 1 in \cite{HP}.
The highest dual root $\eta$ satisfies for all $i=1,\dots,n$
\begin{equation}\label{etasca}
 \sca{\eta}{\al_i}\geq 0.
\end{equation}
\item The $\Z$-dual weight lattice to $Q^\vee$
\begin{equation*}
 P=\set{\om\in \R^n}{\sca{\om}{\al^{\vee}}\in\Z,\, \forall \al^{\vee} \in Q^\vee}=\Z \om_1+\dots +\Z \om_n,
\end{equation*}
with the vectors $ \om_i$ given by $ \sca{\om_i}{\al^\vee_j}=\delta_{ij}.$
For $\la\in P$ the following notation is used, \begin{equation}\label{la}\la=\la_1\om_1+\dots + \la_n\om_n= (\la_1,\dots,\la_n). \end{equation} 
\item The partial ordering on $P$ is given: for $\la,\nu \in P$ it holds that  $\nu\leq \la$ if and only if $\la-\nu = k_1\al_1+\dots+ k_n \al_n$ with $k_i \in \Z^{\geq 0}$ for all $i\in \{1,\dots,n\}$. 
\item
The half of the sum of the positive roots
$$\rho =  \om_1+\dots + \om_n. $$
\item The cone of positive weights $P^+$ and the cone of strictly positive weights $P^{++}=\rho +P^+ $ 
\begin{equation*}
P^{+}=\Z^{\geq 0}\om_1+\dots +\Z^{\geq 0} \om_n,\q P^{++}=\N\om_1+\dots +\N \om_n.
\end{equation*}
\item $n$ reflections $r_\al$, $\al\in\Delta$ in $(n-1)$-dimensional `mirrors' orthogonal to simple roots intersecting at the origin denoted by $$r_1\equiv r_{\al_1}, \, \dots, r_n\equiv r_{\al_n}.$$
\end{itemize}

Following \cite{MP4}, we define so called $m-$degree of $\la\in P^+$ as the scalar product of $\la$ with the highest dual root $\eta$, i.e. by the relation 
\begin{equation*}
|\la|_m= \sca{\la}{\eta}	= \la_1 m^\vee_1+\dots +\la_n m^\vee_n.
\end{equation*}
Let us denote a finite subset of the cone of the positive weights $P^+$ consisting of the weights of the  $m-$degree not exceeding $M$ by $P^+_M$, i.e.
\begin{equation*}
P_M^+=\set{\la \in P^+}{\abs{\la}_m \leq M }.	
\end{equation*}
Recall also the separation lemma which asserts for $\la\in P^+$, $\la\neq 0$ and any $M\in \N$ that 
\begin{equation}\label{sep}
|\la|_m < 2M	\q \Rightarrow \q \la\notin MQ.
\end{equation}
Note that this lemma is proved in  \cite{MP4} for $M>m$ only --- the proof, however, can be repeated verbatim with any $M\in \N$. 

For two dominant weights $\la,\,\nu\in P^+$ for which $\nu\leq \la$ we have for their $m-$degrees
\begin{equation}
|\la|_m-	|\nu|_m =  \sca{\la-\nu}{\eta} = \sum_{i=1}^n k_i \sca{\al_i}{\eta}, \q k_i\geq 0.
\end{equation}
Taking into account equation \eqref{etasca}, we have the following proposition.
\begin{tvr}\label{lanu}
For two dominant weights $\la,\,\nu\in P^+$ with $\nu\leq \la$ it holds that $	|\nu|_m \leq|\la|_m$.
\end{tvr}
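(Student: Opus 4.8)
The plan is to reduce the assertion to a single sign computation, exploiting the bilinearity of the scalar product together with the displayed identity recorded immediately above the statement. First I would invoke the definition of the partial ordering on $P$: the hypothesis $\nu\leq\la$ means precisely that $\la-\nu=k_1\al_1+\dots+k_n\al_n$ for some coefficients $k_i\in\Z^{\geq 0}$. Substituting this expression into the definition of the $m$-degree and using linearity of $\sca{\,}{\,}$ in its first argument, I obtain
\begin{equation*}
|\la|_m-|\nu|_m=\sca{\la}{\eta}-\sca{\nu}{\eta}=\sca{\la-\nu}{\eta}=\sum_{i=1}^n k_i\sca{\al_i}{\eta},
\end{equation*}
which is exactly the identity already displayed before the proposition.

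The decisive step is then to check that the resulting sum is nonnegative. Each coefficient $k_i$ is nonnegative by the very definition of $\leq$, and each pairing $\sca{\al_i}{\eta}$ is nonnegative by the inequality \eqref{etasca}, which asserts that $\sca{\eta}{\al_i}\geq 0$ for every $i\in\{1,\dots,n\}$; here I use that the Euclidean scalar product on $\R^n$ is symmetric, so $\sca{\al_i}{\eta}=\sca{\eta}{\al_i}$. A finite sum of products of nonnegative reals is nonnegative, hence $|\la|_m-|\nu|_m\geq 0$, i.e.\ $|\nu|_m\leq|\la|_m$, as claimed.

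There is no genuine obstacle to overcome: the proposition is an immediate monotonicity consequence of the positivity property \eqref{etasca} of the highest dual root $\eta$, combined with the nonnegativity of the expansion coefficients $k_i$ built into the definition of the dominance order. The only point meriting explicit mention is the symmetry of the inner product invoked above, which lets one pass freely between $\sca{\al_i}{\eta}$ and $\sca{\eta}{\al_i}$ so that \eqref{etasca} applies directly.
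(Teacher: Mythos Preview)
Your proof is correct and follows exactly the same route as the paper: compute $|\la|_m-|\nu|_m=\sum_{i=1}^n k_i\sca{\al_i}{\eta}$ from the definition of the dominance order, then conclude nonnegativity from \eqref{etasca} together with $k_i\geq 0$. The only addition is your explicit remark on the symmetry of the inner product, which the paper leaves implicit.
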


\subsection{Affine Weyl groups}\

The Weyl group $W$ is generated by $n$ reflections $r_1, \dots, r_n$ and its order $|W|$ can be calculated using the formula
\begin{equation}\label{Weyl}
|W|=n!\, m_1\dots m_n \,c.	
\end{equation}
The affine Weyl group $W^{\mathrm{aff}}$ is the semidirect product of the Abelian group of translations $Q^\vee$ and of the Weyl group~$W$,
\begin{equation}\label{direct}
 W^{\mathrm{aff}}= Q^\vee \rtimes W.
\end{equation}
The fundamental domain $F$ of $W^{\mathrm{aff}}$, which consists of precisely one point of each $W^{\mathrm{aff}}$-orbit, is the convex hull of the points $\left\{ 0, \frac{\om^{\vee}_1}{m_1},\dots,\frac{\om^{\vee}_n}{m_n} \right\}$. Considering $n+1$ real parameters $y_0,\dots, y_n\geq 0$, we have
\begin{align}
F &=\setb{y_1\om^{\vee}_1+\dots+y_n\om^{\vee}_n}{y_0+y_1 m_1+\dots+y_n m_n=1  }. \label{deffun}
\end{align}
The volumes vol$(F)\equiv |F|$ of the simplices $F$ are calculated in \cite{HP}.

Considering the standard action of $W$ on $\R^n$, we denote for $\la \in \R^n$ the isotropy group and its order by $$\mathrm{Stab}(\la)= \set{w\in W}{w\la=\la},\q h_\la\equiv|\mathrm{Stab}(\la)|, $$ and denote the orbit by $$W\la=\set{w\la\in \R^n }{w\in W}.$$ Then the orbit-stabilizer theorem gives for the orders
 \begin{equation}\label{Stab}
|W\la|=\frac{|W|}{h_\la}.
\end{equation} 
Considering the standard action of $W$ on the torus $\R^n/Q^{\vee}$, we denote for $x\in \R^n/Q^{\vee}$ the order of its orbit by $\ep(x)$, i.e.
 \begin{equation}\label{ep}
\ep(x)=\abs{\set{wx\in \R^n/Q^{\vee} }{w\in W}}.
\end{equation}

For an arbitrary $M\in\N$, the grid $F_M$ is given as cosets from the $W-$invariant group $\frac{1}{M}P^{\vee}/Q^{\vee}$ with a representative element in the fundamental domain $F$
\begin{equation*}
F_M\equiv\frac{1}{M}P^{\vee}/Q^{\vee}\cap F.
 \end{equation*}
The representative points of $F_M$ can be explicitly written as
\begin{equation}\label{FM}
 F_M = \setb{\frac{u_1}{M}\om^{\vee}_1+\dots+\frac{u_n}{M}\om^{\vee}_n}{u_0,u_1,\dots ,u_n \in \Z^{\geq 0},\, u_0+u_1m_1+\dots + u_n m_n=M}.
 \end{equation}
The numbers of elements of $F_M$, denoted by $|F_M|$, are also calculated in \cite{HP} for all simple Lie algebras.

\subsection{Orbit functions}\

Symmetric orbit functions \cite{KP2} are defined as complex functions  $C_\la:\R^n\map \C$ with the labels $\la\in P^{+}$,
\begin{equation}\label{Cgenorb}
C_\la(x)=\sum_{\nu\in W\la}e^{2 \pi i \sca{ \nu }{x}},\q x\in \R^n.
\end{equation}
Note that in \cite{HP} the results for $C-$functions are formulated for the normalized $C-$functions $\Phi_\la$ which are related to the orbit sums \eqref{Cgenorb} as  $$\Phi_\la = h_\la \,C_\la .$$
Due to the symmetries with respect to the Weyl group $W$ as well as with respect to the shifts from $Q^\vee$
\begin{equation}\label{winv}
C_\la(wx)=C_\la(x),\q C_\la(x+q^\vee)=C_\la(x),\q w\in W,\, q^\vee\in Q^\vee, \end{equation}
it is sufficient to consider $C-$functions restricted to the fundamental domain of the affine Weyl group~$F$. Moreover, the $C-$functions are continuously orthogonal on $F$,
\begin{equation}\label{intor}
\int_F C_\la(x)\overline{C_{\la'}(x)}\,dx=\, \frac{|F||W|}{h_\la}\, \delta_{\la,\la'}\,.
\end{equation}
and form a Hilbert basis of the space $\mathcal{L}^2(F) $ \cite{KP2}, i.e. any function $\wt  f\in\mathcal{L}^2(F)$ can be expanded into the series of $C-$functions
\begin{equation}\label{expansion}
\wt f=\sum_{\la\in P^+}c_\la C_\la, \q  	c_\la= \frac{h_\la}{|F| |W|}\int_F \wt f(x)\overline{C_{\la}(x)}\,dx .
\end{equation}
Special case of the orthogonality relations \eqref{intor} is when one of the weights is equal to zero,
\begin{equation}\label{intort}\int_F C_\lambda(x)\,dx=\,|F|\, \delta_{\lambda,0}\,.\end{equation}
For any $M\in N$, the $C-$functions from a certain subset of $P^+$  are also discretely orthogonal on $F_M$ and form a basis of the space of discretized functions $\C^{F_M}$ of dimension $|F_M|$ \cite{HP}; special case of these orthogonality relations is when one of the weights is equal to zero modulo the lattice $MQ$,
\begin{equation}\label{disort}\sum_{x\in F_M}\ep(x) C_\lambda(x)=\begin{cases}
cM^n&\lambda\in MQ,\\ 0&\lambda\notin MQ.\end{cases}\end{equation}
The key point in developing the cubature formulas is comparison of  formulas \eqref{intort} and \eqref{disort} in the following proposition. 
\begin{tvr}\label{con}
For any $M\in \N$ and $\la\in P^+_{ 2M-1}$ it holds that 
\begin{equation}
\frac{1}{|F|}	\int_F C_\lambda(x)\,dx =\frac{1}{cM^n} \sum_{x\in F_M}\ep(x) C_\lambda(x).
\end{equation}
\end{tvr}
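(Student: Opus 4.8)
The plan is to prove the identity by a direct case distinction on whether $\la=0$, exploiting the fact that both the continuous relation \eqref{intort} and the discrete relation \eqref{disort} are already in a form that isolates a single nonzero value. The whole point is that the membership condition $\la\in MQ$ appearing in \eqref{disort} collapses, for weights of bounded $m-$degree, to the much simpler condition $\la=0$; the separation lemma \eqref{sep} is precisely the tool that effects this collapse. So there is essentially no hard analytic content here: the statement is a bookkeeping consequence of the two orthogonality relations together with the separation lemma.

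First I would dispose of the case $\la=0$. Since $W\cdot 0=\{0\}$, the orbit sum \eqref{Cgenorb} gives $C_0\equiv 1$, so by \eqref{intort} the left-hand side equals $\tfrac{1}{|F|}\cdot|F|\,\delta_{0,0}=1$. On the discrete side, $0\in MQ$ for every $M\in\N$, so \eqref{disort} yields $\sum_{x\in F_M}\ep(x)C_0(x)=cM^n$, and the right-hand side is $\tfrac{1}{cM^n}\cdot cM^n=1$. Both sides agree.

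Next I would treat the generic case $\la\neq 0$ with $\la\in P^+_{2M-1}$. By the definition of $P^+_{2M-1}$ we have $|\la|_m\le 2M-1<2M$, so the hypothesis of the separation lemma \eqref{sep} is met, and the lemma gives $\la\notin MQ$. Feeding this into \eqref{disort} shows the discrete sum vanishes, hence the right-hand side is $0$. Simultaneously, since $\la\neq 0$, the Kronecker delta in \eqref{intort} vanishes and the left-hand side is $\tfrac{1}{|F|}\cdot|F|\,\delta_{\la,0}=0$. Thus both sides are again equal, completing the proof.

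The only point requiring any care is the invocation of \eqref{sep}: one must check that the strict inequality $|\la|_m<2M$ genuinely holds, which is where the choice of index $2M-1$ (rather than $2M$) in the statement becomes essential, and one must recall the remark after \eqref{sep} that the separation lemma is valid for all $M\in\N$ and not merely $M>m$. Beyond confirming this inequality, the argument is a two-line comparison of the right-hand sides of \eqref{intort} and \eqref{disort}, so I do not anticipate any genuine obstacle.
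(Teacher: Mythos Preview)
Your proof is correct and follows exactly the same route as the paper's: a case split on $\la=0$ versus $\la\neq 0$, invoking \eqref{intort} and \eqref{disort} directly and using the separation lemma \eqref{sep} to rule out $\la\in MQ$ when $0\neq\la\in P^+_{2M-1}$. The additional remarks you make (that $C_0\equiv 1$, that the bound $2M-1$ is what gives the strict inequality, and that \eqref{sep} holds for all $M\in\N$) are accurate elaborations of the same argument.
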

\begin{proof}
Suppose first that $\la=0$. Then from \eqref{intort} and \eqref{disort} we obtain
$$\frac{1}{|F|}	\int_F C_0(x)\,dx =1 =  \frac{1}{cM^n} \sum_{x\in F_M}\ep(x) C_0(x)  $$
 Secondly let $\la\neq 0$ and $|\la|_m < 2M$. The from the separation lemma \eqref{sep} we have that $\la\notin MQ$ and thus 
$$\frac{1}{|F|}	\int_F C_\la(x)\,dx =0 =  \frac{1}{cM^n} \sum_{x\in F_M}\ep(x) C_\la(x).  $$
\end{proof}

Let us denote for convenience the $C-$functions corresponding to the basic dominant weights $\om_j $ by  $Z_j$, i.e.
\begin{equation*}
Z_j \equiv C_{\omega_j}.	
\end{equation*}

Recall from \cite{Bour}, Ch. VI, \S 4 that any $W-$invariant sum of the exponential functions  $e^{2 \pi i \sca{ \nu }{a}}$ can be expresssed as a linear combination of some functions $C_\la $ with  $\la\in P^+ $. Also for any $\la\in P^+ $ a function of the monomial type $Z_1^{\la_1}Z_2^{\la_2}\dots Z_n^{\la_n}$ can be expressed as the sum of  $C-$functions by less or equal dominant weights than $\la$, i.e.
\begin{equation}\label{mon}
 Z_1^{\la_1}Z_2^{\la_2}\dots Z_n^{\la_n}= \sum_{\nu\leq\la,\,\nu\in P^+}c_{\nu}C_{\nu},\q c_\nu \in \C, \q c_\la =1.
\end{equation}
Conversely, any function $C_\la$, $\la\in P^+$ can be expressed as a polynomial in variables $Z_1,\dots,Z_n$, i.e. there exist a multivariate polynomials $\wt p_\la \in \C [y_1,\dots,y_n]$ such that
\begin{equation}\label{Cpoly}
C_\la = \wt	p_\la (Z_1,\dots, Z_n)=  \sum_{\nu\leq\la,\,\nu\in P^+}d_{\nu}Z_1^{\nu_1}Z_2^{\nu_2}\dots Z_n^{\nu_n},\q d_\nu \in \C,\q d_\la =1.
\end{equation}

Antisymmetric orbit functions \cite{KP2} are defined as complex functions  $S_\la:\R^n\map \C$ with the labels $\la\in P^{++}$,
\begin{equation}\label{Sgenorb}
S_\la(x)=\sum_{w\in W}\det (w)\, e^{2 \pi i \sca{ w\la}{x}},\q x\in \R^n.
\end{equation}
The antisymmetry with respect to the Weyl group $W$ and the symmetry with respect to the shifts from $Q^\vee$ holds
\begin{equation*}
S_\lambda(wx)=(\det w )\, S_\lambda(x),\q S_\lambda(x+q^\vee)=S_\lambda(x),\q w\in W,\, q^\vee\in Q^\vee, \end{equation*}
Recall that Proposition 9 in \cite{KP2} states that for the lowest $S-$function $S_\rho$ it holds that
\begin{align}
S_\rho (x)&=0, \q a\in F\setminus F^\circ\\
S_\rho(x)&\neq 0,   \q a\in  F^\circ. \label{nonzero}
\end{align}
Since the square of the absolute value $| S_\rho |^2=S_\rho \overline{S_\rho} $ is a $W-$invariant sum of exponentials it can be expressed as a linear combinations of  $C-$functions. Each $C-$function in this combination is moreover a polynomial of the form \eqref{Cpoly}.  Thus there exist a unique polynomial  
$\wt K \in \C [y_1,\dots,y_n]$ such that
\begin{equation}\label{Kt}
| S_\rho |^2= \wt K (Z_1,\dots, Z_n).
\end{equation}

\section{Cubature formulas}\label{seccub}
\subsection{The $X-$transform}\

The key component in the development of the cubature formulas is the integration by substitution. The $X-$transform transforms the  fundamental $F\subset \R^n$ domain to the domain $\Omega\subset \R^n$ on which are the cubature rules defined.  In order to obtain a real valued transform we first need to examine the values of the $C-$functions.  

The $C-$functions of the algebras
\begin{equation}\label{realval}
	A_1,B_n(n\geq3),C_n(n\geq2),D_{2k}(k\geq2),E_7,E_8,F_4,G_2
\end{equation}
are real-valued \cite{KP1}. Using the notation \eqref{la}, for the remaining cases it holds that
\begin{align}\label{complex}
A_n(n\geq2):&\quad C_{(\lambda_1,\lambda_2,\dots,\lambda_n)}(x)=\overline{C_{(\lambda_n,\lambda_{n-1},\dots,\lambda_1)}(x)}\,,\nonumber\\ 
D_{2k+1}(k\geq2):&\quad C_{(\lambda_1,\lambda_2,\dots,\lambda_{2k-1},\lambda_{2k},\lambda_{2k+1})}(x)=\overline{C_{(\lambda_1,\lambda_2,\dots,\lambda_{2k-1},\lambda_{2k+1},\lambda_{2k})}(x)}\,,\\
E_6:&\quad C_{(\lambda_1,\lambda_2,\lambda_{3},\lambda_{4},\lambda_{5},\lambda_6)}(x)=\overline{C_{(\lambda_5,\lambda_4,\lambda_{3},\lambda_{2},\lambda_{1},\lambda_6)}(x)}\,. \nonumber
\end{align}
Specializing the relations \eqref{complex} for  the $C-$functions corresponding to the basic dominant weights $Z_j$, we obtain that the functions $Z_j$ are real valued, except for the following cases for which it holds that 
\begin{align}\label{real}
A_{2k} (k\geq1)\,:&\quad Z_j=\overline{Z_{2k-j+1}}, \,j=1,\dots,k\nonumber\\ 
A_{2k+1} (k\geq1)\,:&\q Z_j=\overline{Z_{2k-j+2}}, \,j=1,\dots,k\nonumber\\ 
D_{2k+1}(k\geq2):&\quad  Z_{2k}=\overline{Z_{2k+1}},\\
E_6:&\quad Z_2=\overline{Z_4},Z_1=\overline{Z_5}. \nonumber
\end{align}
Taking into account \eqref{real}, we introduce the real-valued functions $X_j,\, j\in\{1,\dots,n\}$ as follows.  For the cases \eqref{realval} we set
\begin{equation}\label{var1}
X_j\equiv Z_j,
\end{equation}
and for the remaining cases \eqref{real} we define
\begin{equation}\label{var2}
\begin{alignedat}{4}
&A_{2k}:&\quad &X_j=\frac{Z_j+Z_{2k-j+1}}{2}\,,X_{2k-j+1}=\frac{Z_j-Z_{2k-j+1}}{2i},\, j=1,\dots, k\,;\\
&A_{2k+1}:&\quad &X_j=\frac{Z_j+Z_{2k-j+2}}{2}\,,X_{k+1}=Z_{k+1}\,,X_{2k-j+2}=\frac{Z_j-Z_{2k-j+2}}{2i},\, j=1,\dots, k,\\
&D_{2k+1}:&\quad& X_j=Z_j\,,X_{2k}=\frac{Z_{2k}+Z_{2k+1}}{2}\,,X_{2k+1}=\frac{Z_{2k}-Z_{2k+1}}{2i}\,,j=1,\dots,2k-1\,;\\
&E_6:&\quad& X_1=\frac{Z_1+Z_5}{2}\,,X_2=\frac{Z_2+Z_4}{2}\,,X_3=Z_3\,,X_4=\frac{Z_2-Z_4}{2i}\,,X_5=\frac{Z_1-Z_5}{2i},X_6=Z_6.
\end{alignedat}
\end{equation}
Thus, we obtain a crucial mapping $X:\R^n\map\R^n$ given by
\begin{equation}\label{Xtrans}
X(x)\equiv(X_1(x),\dots,X_n(x)).	
\end{equation}

The image $\Omega\subset \R^n$ of  the fundamental domain $F$ under the mapping $X$ forms the integration domain on which the cubature rules will be formulated, i.e.
\begin{equation}\label{Omega}
\Omega\equiv X(F).
\end{equation}
In order to use the mapping $X$ for an integration by substitution we need to know that it is one-to-one except for possibly some set of zero measure. 
Since the image $\Omega_M\subset \R^n$ of  the set of  points $F_M$ under the mapping $X$ forms the set of nodes for the cubature rules, i.e.
\begin{equation}\label{OmegaM}
\Omega_M\equiv X(F_M),
\end{equation}
a discretized version of the one-to-one correspondence of the restriced mapping $X_M$ of $X$ to $F_M$, i.e.
\begin{equation}\label{Xtransd}
X_M\equiv X\restriction_{F_M}
\end{equation}
is also essential. Note that due to the periodicity of $C-$functions \eqref{winv}, the restriction \eqref{Xtransd} is well-defined for the cosets from $F_M$. 
\begin{tvr}\label{corr}
The mapping $X:F\map\Omega$, given by \eqref{Xtrans}, is one-to-one correspondence except for some set of zero measure. For any $M\in \N$ is the restriction mapping $X_M:F_M\map\Omega_M$, given by \eqref{Xtransd}, one-to-one correspondence and thus it holds that 
\begin{equation}
|\Omega_M|= |F_M|.	
\end{equation}
\end{tvr}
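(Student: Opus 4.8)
The plan is to reduce the real mapping $X$ to the complex tuple $Z=(Z_1,\dots,Z_n)$, then to prove that $Z$ separates the orbits of $W^{\mathrm{aff}}$, so that it is injective on the strict fundamental domain $F$; injectivity of $X_M$ and the equality $|\Omega_M|=|F_M|$ then follow by mere restriction. First I would observe that the passage from $Z$ to $X$ loses no information. In the real cases \eqref{realval} one has $X_j=Z_j$ by \eqref{var1}, while in the remaining cases \eqref{real} the relation $Z_j=\overline{Z_{\sigma(j)}}$ for the index involution $\sigma$ implicit in \eqref{real}, combined with the definitions \eqref{var2}, shows that the coordinates $X_j$ encode the real and imaginary parts of each conjugate pair (and retain the already real $Z_j$ unchanged). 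Hence $X(x)=X(x')$ if and only if $Z_j(x)=Z_j(x')$ for all $j$, and it suffices to prove that $x\mapsto Z(x)$ is injective on $F$.

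The core step is to show that $Z$ separates the points of $F$. Since each $Z_j=C_{\omega_j}$ is $W$-invariant and $Q^\vee$-periodic by \eqref{winv}, the tuple $Z$ descends to a function on the orbit space $(\R^n/Q^\vee)/W$; as $F$ contains precisely one representative of each $W^{\mathrm{aff}}$-orbit, distinct points of $F$ correspond to distinct $W$-orbits on the torus $\R^n/Q^\vee$, and injectivity of $Z$ on $F$ is equivalent to $Z$ separating these orbits. To establish the latter I would combine \eqref{mon} and \eqref{Cpoly} with the cited fact from \cite{Bour}, Ch.~VI, \S4: every $W$-invariant exponential sum is a $\C$-linear combination of the $C_\lambda$, and by \eqref{Cpoly} each $C_\lambda$ is a polynomial in $Z_1,\dots,Z_n$, so the algebra generated by $Z_1,\dots,Z_n$ is exactly the algebra $\mathcal A$ of all $W$-invariant trigonometric polynomials. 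Because $P$ is the $\Z$-dual lattice of $Q^\vee$, the characters $e^{2\pi i\sca{\mu}{\cdot}}$, $\mu\in P$, separate the points of $\R^n/Q^\vee$; averaging over $W$ a trigonometric polynomial that equals $1$ on one finite orbit and $0$ on another disjoint orbit yields an element of $\mathcal A$ separating the two orbits. Thus $\mathcal A$, and therefore its generating tuple $Z$, separates $W$-orbits, so $Z$ is injective on all of $F$.

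Finally, injectivity of $Z$, hence of $X$, on $F$ makes $X:F\map\Omega=X(F)$ a bijection, which in particular is one-to-one off a null set; the degeneracy of the substitution is confined to the boundary $F\setminus F^\circ$, where $S_\rho=0$ by \eqref{nonzero}, while $|S_\rho|^2=\wt K(Z)$ by \eqref{Kt} stays positive on the interior $F^\circ$, so $X$ is in fact a diffeomorphism there and the change of variables is legitimate. Restricting to $F_M\subset F$ preserves injectivity, so $X_M:F_M\map\Omega_M$ is a bijection and $|\Omega_M|=|F_M|$. The hard part will be the pointwise nature of the separation in the second step: it must hold also at boundary points of $F$ whose torus images carry nontrivial $W$-stabilizers, since it is precisely this that upgrades the generic (almost-everywhere) injectivity needed for the continuous statement to the exact injectivity on $F_M$ required for the node-counting identity.
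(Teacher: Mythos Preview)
Your argument is correct and in fact establishes more than the paper asks: you prove that $X$ (equivalently $Z$) is genuinely injective on all of $F$, from which both the almost-everywhere statement and the bijectivity of $X_M$ follow at once by restriction. The paper takes a different route. It treats the two assertions separately, each by contradiction through a completeness property: for the continuous claim it assumes a failure of injectivity on a set of positive measure, deduces $C_\lambda(x)=C_\lambda(y)$ for all $\lambda\in P^+$ there, and contradicts the fact that the $C_\lambda$ form a Hilbert basis of $\mathcal L^2(F)$; for the discrete claim it assumes two distinct points of $F_M$ have the same $Z$-values, deduces $C_\lambda(x)=C_\lambda(y)$ for all $\lambda$, and contradicts the fact that a suitable subfamily of $C_\lambda$ is a basis of $\C^{F_M}$.

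Your orbit-separation argument via the algebra $\mathcal A$ of $W$-invariant trigonometric polynomials is more conceptual and yields a stronger pointwise conclusion in one stroke; it also makes transparent why boundary points (with nontrivial stabilizer) cause no trouble. The paper's approach has the virtue of staying entirely inside the analytic machinery already set up (continuous and discrete orthogonality of $C$-functions), so no additional separation lemma is needed. Both reductions from $X$ to $Z$ coincide: the paper likewise notes that the linear maps \eqref{var1}, \eqref{var2} are invertible. Your closing remarks on the Jacobian vanishing only on $\partial F$ are correct but belong to the next proposition rather than to this one.
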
	
\begin{proof}
Let us assume that there exists a set $F'\subset F$ of non-zero measure such that $X(x)=X(y)$ with $x,y\in F'$. Since the transforms  \eqref{var1}, \eqref{var2} are as regular linear mappings one-to-one correspondences, this fact implies that $Z_1(x)=Z_1(y),\dots, Z_n(x)=Z_n(y) $ with $x,y\in F'$. Then from the polynomial expression \eqref{Cpoly} we obtain for all $\la\in P^+$ that it holds that $C_\la(x)=C_\la(y)$. Since the $C-$functions $C_\la,\,\la\in P^+  $ form a Hilbert basis of the space $\mathcal{L}^2(F) $ we conclude that for any $f\in\mathcal{L}^2(F) $ is valid that $f(x)=f(y)$, $x,y\in F'$ which is contradiction.

Retracing the steps of the continuous case above, let us assume that there exist two distinct points  $x,y\in F_M$, $x\neq y $ such that $X(x)=X(y)$. Since the transforms  \eqref{var1}, \eqref{var2} are as regular linear mappings one-to-one correspondences, this fact again implies that $Z_1(x)=Z_1(y),\dots, Z_n(x)=Z_n(y) $. Then from the polynomial expression \eqref{Cpoly} we obtain for all $\la\in P^+$ that it holds that $C_\la(x)=C_\la(y)$. The same equality has hold for those $C-$functions $C_\la$ which form a basis of the space $\C^{F_M}$. We conclude that for any $f\in\C^{F_M}$ is valid that $f(x)=f(y)$, $x\neq y$ which is contradiction.
\end{proof}

The absolute value of the determinant of the Jacobian matrix of the $X$ transform \eqref{Xtrans} is essential for construction of the cubature formulas ---   its value is determined in the following proposition.
\begin{tvr}\label{jac}
The absolute value of the Jacobian determinant $|J_x(X)|$ of the $X-$transform \eqref{Xtrans} is given by
\begin{equation}
|J_x(X)| = \frac{\kappa (2\pi)^n }{|F||W|}	|S_\rho (x)|,
\end{equation}
where $\kappa$ is defined as
\begin{equation}
\kappa=\begin{cases}
2^{-\lfloor\frac{n}{2}\rfloor}& \text{for } A_n\\
\frac{1}{2}&\text{for } D_{2k+1}\\
\frac{1}{4}&\text{for }  E_6\\
1&\text{otherwise.}
\end{cases}
\end{equation}
\end{tvr}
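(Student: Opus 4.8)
The plan is to factor the $X$-transform through the intermediate map $Z=(Z_1,\dots,Z_n)$ and separate the contribution of the linear recombination \eqref{var1}--\eqref{var2} from the genuinely analytic part. Since each $X_j$ is a constant-coefficient linear combination of the $Z_l$, say $X=LZ$ with $L$ an $n\times n$ matrix independent of $x$, the chain rule gives $\left(\partial X_j/\partial x_k\right)=L\cdot\left(\partial Z_l/\partial x_k\right)$ and hence $|J_x(X)|=|\det L|\cdot\bigl|\det(\partial Z_l/\partial x_k)\bigr|$. The first task is to verify that $|\det L|=\kappa$ in every case: in the real cases \eqref{var1} one has $L=\mathrm{Id}$, while in \eqref{var2} the matrix $L$ is block diagonal, each conjugate pair $(Z_j,\overline{Z_j})\mapsto(\mathrm{Re}\,Z_j,\mathrm{Im}\,Z_j)$ contributing a $2\times2$ block of determinant $\pm\tfrac{\i}{2}$ and every self-conjugate $Z_j$ a trivial $1\times1$ block. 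Counting the conjugate pairs ($\lfloor n/2\rfloor$ for $A_n$, one for $D_{2k+1}$, two for $E_6$, none otherwise) reproduces exactly the stated values of $\kappa$.

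It remains to evaluate $J\equiv\det(\partial Z_l/\partial x_k)$, and the core of the argument is the identity $J=(2\pi\i)^n\det(\om_1,\dots,\om_n)\,S_\rho$. I would obtain it from the exterior-algebra expansion of $dZ_1\wedge\dots\wedge dZ_n$. Writing $e_\nu(x)=e^{2\pi\i\sca{\nu}{x}}$ so that $Z_j=\sum_{\nu\in W\om_j}e_\nu$ and $de_\nu=2\pi\i\,e_\nu\,\nu^{\flat}$ (with $\nu^\flat=\sum_k\nu_k\,dx_k$), multilinearity yields
\begin{equation*}
J=(2\pi\i)^n\!\!\sum_{(\nu^{(1)},\dots,\nu^{(n)})\in W\om_1\times\dots\times W\om_n}\!\!\det\bigl(\nu^{(1)},\dots,\nu^{(n)}\bigr)\,e^{2\pi\i\sca{\nu^{(1)}+\dots+\nu^{(n)}}{x}}.
\end{equation*}
This exhibits $J$ as a finite exponential sum all of whose weights $\mu=\nu^{(1)}+\dots+\nu^{(n)}$ satisfy $\mu\leq\rho$, because $\nu^{(j)}\leq\om_j$ in the dominance order. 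Moreover $J\,dx_1\wedge\dots\wedge dx_n=dZ_1\wedge\dots\wedge dZ_n$ is $W$-invariant (each $Z_j$ is, by \eqref{winv}), so $J(wx)=\det(w)\,J(x)$: the function $J$ is $W$-antisymmetric.

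The decisive step is then to combine antisymmetry with the weight bound $\mu\le\rho$. Any finite $W$-antisymmetric exponential sum over $P$ is a linear combination of the $S_\mu$ with $\mu\in P^{++}$, since a nonzero coefficient at a weight lying on a wall would have to equal its own negative under the fixing reflection and thus vanish. As every $\mu\in P^{++}$ obeys $\mu\ge\rho$, the only strictly dominant weight compatible with $\mu\le\rho$ is $\mu=\rho$; hence $J=a_\rho S_\rho$, where $a_\rho$ is the coefficient of $e_\rho$ in $J$. Reading this coefficient off the displayed sum, the constraint $\sum_j\nu^{(j)}=\rho=\sum_j\om_j$ together with $\nu^{(j)}\le\om_j$ forces $\nu^{(j)}=\om_j$ for all $j$, so $a_\rho=(2\pi\i)^n\det(\om_1,\dots,\om_n)$. (Since $S_\rho\neq0$ on $F^\circ$ by \eqref{nonzero}, this simultaneously confirms that $J\ne 0$ and that $X$ is a local diffeomorphism on the interior.)

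Finally I would translate $|\det(\om_1,\dots,\om_n)|$, the covolume of the weight lattice $P$ in orthonormal coordinates, into the quantities appearing in the statement. As $P$ is the $\Z$-dual of $Q^\vee$, one has $\mathrm{vol}(\R^n/P)\cdot\mathrm{vol}(\R^n/Q^\vee)=1$, while $F$ being a fundamental domain for $W$ acting on the torus $\R^n/Q^\vee$ gives $\mathrm{vol}(\R^n/Q^\vee)=|W|\,|F|$. Hence $|\det(\om_1,\dots,\om_n)|=1/(|W|\,|F|)$, and assembling the pieces yields $|J_x(X)|=\kappa\,(2\pi)^n|\det(\om_1,\dots,\om_n)|\,|S_\rho(x)|=\kappa(2\pi)^n|S_\rho(x)|/(|F||W|)$. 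I expect the main obstacle to be the middle step — pinning down $J$ as a scalar multiple of $S_\rho$ rather than merely some antisymmetric sum — since it is exactly there that the dominance bound $\mu\le\rho$ coming from the orbit structure must be married to the characterization of antisymmetric exponential sums; the $\kappa$-bookkeeping and the lattice-volume identity are routine by comparison.
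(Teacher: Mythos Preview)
Your argument is correct. The overall architecture matches the paper's: both factor $X$ as the linear recombination $R$ (your $L$) composed with $\zeta:x\mapsto(Z_1,\dots,Z_n)$, both read off $\kappa=|\det L|$ by counting conjugate pairs, and both identify $|\det\alpha^\vee|$ (equivalently, your $|\det(\omega_1,\dots,\omega_n)|^{-1}$) with $|W|\,|F|$. The genuine difference is in the treatment of the core Jacobian $|J_x(\zeta)|$. The paper does not prove this here at all: it quotes equation~(32) of \cite{MP4}, which asserts that in the $\alpha^\vee$-coordinates the Jacobian of $a\mapsto(Z_1(\alpha^\vee a),\dots,Z_n(\alpha^\vee a))$ equals $(2\pi)^n|S_\rho|$, and then undoes the coordinate change. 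You instead give a self-contained derivation: expand $dZ_1\wedge\dots\wedge dZ_n$ as an exponential sum, observe that $W$-invariance of the $Z_j$ forces $W$-antisymmetry of the scalar Jacobian, bound all occurring weights by $\rho$ via $w\omega_j\le\omega_j$, and conclude that only $S_\rho$ survives with coefficient $(2\pi\i)^n\det(\omega_1,\dots,\omega_n)$. This is exactly the argument that lies behind the cited result in \cite{MP4}, so what you gain is independence from that reference and a transparent explanation of \emph{why} $S_\rho$ appears; what the paper gains is brevity. Your lattice-duality computation $\mathrm{vol}(\R^n/P)\cdot\mathrm{vol}(\R^n/Q^\vee)=1$ together with $\mathrm{vol}(\R^n/Q^\vee)=|W|\,|F|$ is equivalent to the paper's appeal to $|\det\alpha^\vee|=|W|\,|F|$ from \cite{HP}, since $(\omega_i)$ and $(\alpha_j^\vee)$ are mutually inverse-transpose matrices.
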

\begin{proof}
Note that the $X$ transform can be composed of the the following two
transforms: the transform $\zeta: x\mapsto (Z_1(x),\dots,Z_n(x))   $ and the transform $R: (Z_1,\dots,Z_n)\mapsto (X_1,\dots,X_n)$ via relations  \eqref{var1}, \eqref{var2}. To calculate the Jacobian of the transform $\zeta$, let us denote by $\al^\vee$ the matrix of the coordinates (in columns) of the vectors $\al^\vee_1,\dots, \al^\vee_n$ in the standard orthonormal basis of $\R^n$ and by $a_1,\dots,a_n$ the coordinates of a point $x\in \R ^n$ in $\al^\vee-$basis, i.e. $x=a_1\al^\vee_1+\dots+a_n \al^\vee_n$. 
If $a$ denotes the coordinates $a_1,\dots,a_n$ arranged in a column vector then it holds that $x=\al^\vee a$.
The absolute value of the Jacobian of the mapping $a\mapsto  (Z_1 (\al^\vee a) ,\dots,Z_n(\al^\vee a)) $ is according to equation (32) in \cite{MP4} given by $(2\pi)^n |S_\rho (\al^\vee a)|$. Using the chain rule, this implies  for the absolute value of the Jacobian $|J_x(\zeta)|$ of the map $\zeta$ that
\begin{equation*}
	|J_x(\zeta)|= |\det \al^\vee|^{-1}(2\pi)^n |S_\rho (x)|.
\end{equation*}
It can be seen directly from formula \eqref{Weyl} and Proposition 2.1 in \cite{HP} that 
\begin{equation*}
 |\det \al^\vee|=|W| |F|.
\end{equation*}
The calculation of the absolute value of the Jacobian determinant $\kappa= |J_x(R)|$ is straightforward from definitions  \eqref{var1}, \eqref{var2}.
\end{proof}


\subsection{The cubature formula}\

We attach to any $\la \in P^+$ a monomial $y^\la\equiv y_1^{\la_1}\dots y_n^{\la_n}\in \C [y_1,\dots,y_n]$ and assign to this monomial the  $m-$degree $|\la|_m$ of $\la$.
The $m-$degree of a polynomials $p\in \C [y_1,\dots,y_n]$, denoted by $\mathrm{deg}_m\,p$, is defined as the largest $m-$degree of a monomial occurring in $p(y)\equiv p(y_1,\dots,y_n)$. 
For instance we observe from Proposition \ref{lanu} and \eqref{Cpoly} that the $m-$degree of the $C-$polynomials $\wt p_\la$ coincides with the  $m-$degree of~$\la$,
\begin{equation}\label{degwtp}
	\mathrm{deg}_m\,\wt p_\la =|\la|_m .
\end{equation}
The subspace $\Pi_M\subset  \C [y_1,\dots,y_n]$ is formed by the polynomials of  $m-$degree at most $M$, i.e. 
\begin{equation}\label{mdegree}
\Pi_M\equiv \set{ p\in   \C [y_1,\dots,y_n]}{\mathrm{deg}_m\,p\leq M}\,.
\end{equation}
In order to investigate how the $m-$degree of a polynomial changes under the substitution of the type  \eqref{var2} we formulate the following proposition.
\begin{tvr}\label{subslem}
Let $j,k\in \{1,\dots,n\}$ be two distinct indices $j< k$ such that $m^\vee_j=m^\vee_k$ and $p , \wt p\in \C [y_1,\dots,y_n]$ two polynomials such that
\begin{equation*}
\wt p (y_1,\dots,y_n )= p 	(y_1,\dots,y_{j-1}, \frac{y_j+y_k}{2}  ,\dots,y_{k-1}, \frac{y_j-y_k}{2i},\dots  y_n )
\end{equation*}
holds. Then $\mathrm{deg}_m \wt p=\mathrm{deg}_m p$.
\end{tvr}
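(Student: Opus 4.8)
The plan is to recast the statement in terms of the $m$-grading on $\C[y_1,\dots,y_n]$ and to exploit the invertibility of the substitution. First I would introduce, for each monomial $y^\la$, its $m$-degree $|\la|_m$, and call a polynomial \emph{$m$-homogeneous of $m$-degree $d$} if every monomial occurring in it has $m$-degree exactly $d$. Each $p$ then decomposes uniquely as a finite sum $p=\sum_d p^{(d)}$ of its $m$-homogeneous components, and $\mathrm{deg}_m\,p=\max\{d:p^{(d)}\neq 0\}$.

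Next I would observe that the substitution in question, call it $\sigma$ (acting on polynomials by $(\sigma p)(y)=p(\dots,\tfrac{y_j+y_k}{2},\dots,\tfrac{y_j-y_k}{2i},\dots)$ with all other variables fixed, so that $\tilde p=\sigma p$), is \emph{graded}. Here the hypothesis $m^\vee_j=m^\vee_k$ is decisive: it guarantees that both $\tfrac{y_j+y_k}{2}$ and $\tfrac{y_j-y_k}{2i}$ are $m$-homogeneous, each of $m$-degree $m^\vee_j=m^\vee_k$, i.e. of exactly the $m$-degree of the variable it replaces. Since the remaining $y_i$ are $m$-homogeneous of $m$-degree $m^\vee_i$, applying $\sigma$ to a monomial $y^\la$ produces a polynomial all of whose monomials again carry $m$-degree $|\la|_m$. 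Thus $\sigma$ sends $m$-homogeneous polynomials of $m$-degree $d$ to $m$-homogeneous polynomials of $m$-degree $d$, whence $\mathrm{deg}_m\,\sigma q\le \mathrm{deg}_m\,q$ for every $q$; in particular $\mathrm{deg}_m\,\tilde p\le\mathrm{deg}_m\,p$.

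The reverse inequality is the only subtle point, since a graded substitution could a priori lower the degree by cancelling its top-degree component. To exclude this I would use that the $2\times 2$ linear map on $(y_j,y_k)$ defining $\sigma$ has determinant $\tfrac{i}{2}\neq 0$, hence is invertible, the inverse substitution $\sigma^{-1}$ being $y_j\mapsto y_j+iy_k$, $y_k\mapsto y_j-iy_k$ (other variables fixed), which recovers $p=\sigma^{-1}\tilde p$. Because $y_j\pm iy_k$ are again $m$-homogeneous of $m$-degree $m^\vee_j=m^\vee_k$, the inverse $\sigma^{-1}$ is graded by the identical argument, so $\mathrm{deg}_m\,p=\mathrm{deg}_m\,\sigma^{-1}\tilde p\le\mathrm{deg}_m\,\tilde p$. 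Combining the two inequalities yields the claimed equality $\mathrm{deg}_m\,\tilde p=\mathrm{deg}_m\,p$.

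I expect the main (indeed essentially the only) obstacle to be the possibility of top-degree cancellation under $\sigma$, and the resolution is structural rather than computational: $\sigma$ is invertible and its inverse is graded for the same reason, so no nonzero $m$-homogeneous component can be annihilated. I would stress that the assumption $m^\vee_j=m^\vee_k$ is used precisely to make both $\sigma$ and $\sigma^{-1}$ graded, and that nothing about the underlying root system beyond the equality of these two dual marks is needed.
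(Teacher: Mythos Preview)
Your argument is correct. The paper takes a more computational route: it reduces to the case where $p=y^\lambda$ is a monomial, expands $\big(\tfrac{y_j+y_k}{2}\big)^{\lambda_j}\big(\tfrac{y_j-y_k}{2i}\big)^{\lambda_k}$ via the binomial theorem, and reads off that every resulting monomial has $m$-degree $(r+s)m_j^\vee+(\lambda_j+\lambda_k-r-s)m_k^\vee+\sum_{l\ne j,k}\lambda_l m_l^\vee=|\lambda|_m$ once $m_j^\vee=m_k^\vee$. Your approach packages the same observation as ``$\sigma$ is graded'' without the explicit expansion, and then adds the invertibility step to secure the reverse inequality.

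That last point is where your proof is actually tighter than the paper's. The paper's reduction to monomials only yields $\deg_m\widetilde p\le\deg_m p$ for general $p$; the concluding equality is stated for a single monomial, and the passage back to arbitrary $p$ (i.e.\ ruling out cancellation among the images of different top-degree monomials) is not addressed. Your use of the explicit inverse $\sigma^{-1}:y_j\mapsto y_j+iy_k,\ y_k\mapsto y_j-iy_k$, which is graded for the same reason, closes that gap cleanly. So the two proofs share the same core idea (equality of dual marks makes the substitution $m$-homogeneous), but yours is more structural and, as written, more complete on the equality claim.
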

\begin{proof}
Since any polynomial $p\in \Pi_M$ is a linear combination of monomials $y^\lambda$ with $\abs{\lambda}_m\leq M$, it is sufficient to prove  $\wt p \in \Pi_M$ for all monomials $p$ of $m-$degree at most $M$.
If $p$ is a monomial $y^\lambda$ with $\abs{\lambda}_m\leq M$, then
$$\wt p(y_1,\dots, y_n) =\left(\frac{y_j+y_k}{2}\right)^{\lambda_j}\left(\frac{y_j-y_k}{2i}\right)^{\lambda_k}\prod_{l\in\{1,\dots,n\}\setminus \{j,k\}}y_l^{\lambda_l}\,.$$
Using the binomial expansion, we obtain
$$\wt p(y_1,\dots, y_n) =\frac{1}{i^{\lambda_k}2^{\lambda_j+\lambda_k}}\sum_{r=0}^{\lambda_j}\sum_{s=0}^{\lambda_k}(-1)^{\lambda_k-s}\comb{\lambda_j}{ r }\comb{\lambda_k}{ s } y_j^{r+s}y_k^{\lambda_j+\lambda_k-(r+s)}\prod_{l\in\{1,\dots,n\}\setminus \{j,k\}}y_l^{\lambda_l}\,.$$
Therefore, the $m-$degree of the polynomial $\wt p$ is given by
$$\mathrm{deg}_m \wt p=\max_{r,s}\{(r+s)m_j^\vee+(\lambda_j+\lambda_k-(r+s))m_k^\vee+ \sum_{l\in\{1,\dots,n\}\setminus \{j,k\}}\lambda_lm_l^\vee\}\,.$$
Since we assume that $m_j^\vee=m_k^\vee$, we conclude that 
$\mathrm{deg}_m \,\wt p=\sum_{l=1}^n\lambda_lm_l^\vee=\mathrm{deg}_m \, p$.
\end{proof}

Having the  $X_M$ transform \eqref{Xtransd}, it is possible to transfer uniquely the values \eqref{ep} of $\ep(x),\, x\in F_M$ to the points of $\Omega_M$, i.e. by the relation
\begin{equation}
	\wt\ep (y)\equiv \ep (X^{-1}_My),\q y\in \Omega_M.
\end{equation}
Taking the inverse transforms  of \eqref{var1}, \eqref{var2} and substituting them into the polynomials \eqref{Kt}, \eqref{Cpoly} we obtain the polynomials $K,p_\la \in\C [y_1,\dots,y_n]$ such that
\begin{equation}\label{KK}
	| S_\rho |^2 =  K (X_1,\dots, X_n),
\end{equation}

\begin{thm}[Cubature formula] For any $M\in\N$ and any  $p\in \Pi_{2M-1}$ it holds that
\begin{equation}\label{gcub}
\int_{\Omega}p(y)K^{-\frac{1}{2}}(y)\, dy=\frac{\kappa}{c|W|}\left(\frac{2\pi}{M}\right)^n\sum_{y\in \Omega_M}\wt\ep(y)p(y)\,.\end{equation}
\end{thm}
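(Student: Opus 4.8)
The plan is to verify the identity on a convenient basis of $\Pi_{2M-1}$ and then extend by linearity. The natural basis consists of the polynomials $p_\la$ obtained by writing each $C_\la$ in the variables $X_1,\dots,X_n$, i.e. the polynomials gotten from $\wt p_\la$ in \eqref{Cpoly} by the substitution \eqref{var1}, \eqref{var2}, so that $C_\la = p_\la(X_1,\dots,X_n)$. First I would check that $\{p_\la : \la \in P^+_{2M-1}\}$ is indeed a basis of $\Pi_{2M-1}$. By \eqref{Cpoly} together with Proposition \ref{lanu}, the expansion of $\wt p_\la$ involves only monomials $y^\nu$ with $\nu \leq \la$, hence with $|\nu|_m \leq |\la|_m$, and has leading coefficient $d_\la=1$; thus the transition matrix between $\{y^\la : |\la|_m\le 2M-1\}$ and $\{\wt p_\la : |\la|_m \le 2M-1\}$ is unitriangular and the latter is a basis of $\Pi_{2M-1}$. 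Applying Proposition \ref{subslem} once for each symmetric pair of indices occurring in \eqref{var2} — which in every relevant case $A_n$, $D_{2k+1}$, $E_6$ couples nodes carrying equal dual marks, so that the hypothesis $m^\vee_j=m^\vee_k$ holds — shows $\mathrm{deg}_m\, p_\la=\mathrm{deg}_m\,\wt p_\la=|\la|_m$. Since the substitution is a linear bijection of $\C[y_1,\dots,y_n]$ preserving the $m$-degree filtration, it carries the basis $\{\wt p_\la\}$ of $\Pi_{2M-1}$ onto $\{p_\la\}$, which is therefore again a basis of $\Pi_{2M-1}$.

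It remains to prove the formula for $p=p_\la$ with $\la\in P^+_{2M-1}$. For the left-hand side I would integrate by substitution $y=X(x)$, using Proposition \ref{corr} (a bijection $F\to\Omega$ off a null set) and the Jacobian of Proposition \ref{jac}. Because $K^{1/2}(X(x))=|S_\rho(x)|$ by \eqref{KK}, the factor $K^{-1/2}$ pulls back to $|S_\rho(x)|^{-1}$, which cancels the $|S_\rho(x)|$ in $|J_x(X)|=\kappa(2\pi)^n|S_\rho(x)|/(|F||W|)$. The cancellation is valid almost everywhere, namely on $F^\circ$ where $S_\rho\neq 0$ by \eqref{nonzero}, and the boundary $F\setminus F^\circ$ is a null set, so the pulled-back integrand is the bounded function $C_\la$ and no integrability problem arises. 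This gives
\begin{equation*}
\int_{\Omega} p_\la(y) K^{-\frac12}(y)\,dy = \frac{\kappa(2\pi)^n}{|F||W|}\int_F C_\la(x)\,dx.
\end{equation*}

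For the right-hand side I would use the discrete bijection $X_M:F_M\to\Omega_M$ of Proposition \ref{corr}, under which $\wt\ep(y)=\ep(X_M^{-1}y)$ and $p_\la(y)=C_\la(X_M^{-1}y)$, so that
\begin{equation*}
\frac{\kappa}{c|W|}\left(\frac{2\pi}{M}\right)^n\sum_{y\in\Omega_M}\wt\ep(y)p_\la(y)=\frac{\kappa(2\pi)^n}{c|W|M^n}\sum_{x\in F_M}\ep(x)C_\la(x).
\end{equation*}
Finally, since $\la\in P^+_{2M-1}$, Proposition \ref{con} equates $\frac{1}{|F|}\int_F C_\la(x)\,dx$ with $\frac{1}{cM^n}\sum_{x\in F_M}\ep(x)C_\la(x)$; inserting this into the expression for the left-hand side makes the two sides coincide, which completes the proof on the basis and hence, by linearity, on all of $\Pi_{2M-1}$.

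The two changes of variables are routine once Propositions \ref{corr}, \ref{jac} and \eqref{KK} are in hand. The step that deserves the most care, and which I expect to be the main obstacle, is the basis claim: one must be sure that the linear substitution \eqref{var2} genuinely preserves the $m$-degree filtration, so that $\{p_\la\}_{|\la|_m\le 2M-1}$ spans all of $\Pi_{2M-1}$ rather than a proper subspace. This is precisely where Proposition \ref{subslem}, applied to the pairs of equal dual marks, is indispensable, and it is also what pins the degree threshold to $2M-1$, matching the exact range of validity of Proposition \ref{con}.
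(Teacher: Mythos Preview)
Your proposal is correct and follows essentially the same approach as the paper: both arguments pull the integral back to $F$ via Proposition~\ref{jac} and Proposition~\ref{corr}, rewrite the discrete sum over $F_M$ via the bijection $X_M$, and reduce to Proposition~\ref{con}, with Proposition~\ref{subslem} controlling the $m$-degree under the substitution~\eqref{var2}. The only organizational difference is that you verify the identity on the basis $\{p_\la:\la\in P^+_{2M-1}\}$ of $\Pi_{2M-1}$ and extend by linearity, whereas the paper takes a general $p\in\Pi_{2M-1}$, passes to $\wt p(Z_1,\dots,Z_n)$ via Proposition~\ref{subslem}, and then expands $\wt p$ through monomials into $C$-functions using \eqref{mon} before applying Proposition~\ref{con} term by term; your route is slightly more direct since it bypasses the intermediate monomial expansion.
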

\begin{proof}
Proposition \ref{corr} guarantees that the $X-$transform is one-to-one except for some set of measure zero and Proposition \ref{jac} together with \eqref{nonzero} gives that the Jacobian determinant is non-zero except for the boundary of $F$.  Thus using the integration by substitution $y=X(x)$ we obtain
$$\int_{\Omega}p(y)K^{-\frac{1}{2}}(y)\, dy =\frac{\kappa (2\pi)^n}{|F||W|}\int_{F}p(X(x))\, dx .$$
The one-to-one correspondence for the points $F_M$ and $\Omega_M$ from Proposition \ref{corr} enables us to rewrite the finite sum in \eqref{gcub} as
\begin{equation}\label{undev}
	\frac{\kappa}{c|W|}\left(\frac{2\pi}{M}\right)^n\sum_{y\in \Omega_M}\wt\ep(y)p(y)=\frac{\kappa}{c|W|}\left(\frac{2\pi}{M}\right)^n\sum_{x\in F_M}\ep(x)p(X(x)) .
\end{equation}
Succesively applying Proposition \ref{subslem} to perform the substitutions \eqref{var2} in $p$ we conclude that there exist a polynomial $\wt p\in \Pi_{2M-1 }$ such that $\wt p (Z_1,\dots,Z_n)= p(X_1,\dots,X_n) .$
Due to \eqref{mon} we obtain for the polynomial $\wt p$ that $$p(X_1,\dots,X_n) =\wt p (Z_1,\dots,Z_n)= \sum_{\la \in P^+_{2M-1}}\wt c_\la Z_1^{\la_1}Z_2^{\la_2}\dots Z_n^{\la_n}=\sum_{\la \in P^+_{2M-1}}\wt c_\la \sum_{\nu\leq\la,\,\nu\in P^+}c_{\nu}C_{\nu} $$
and therefore it holds that
\begin{equation}\label{konec1}
	\frac{1}{|F|}\int_{F}p(X(x))\, dx= \sum_{\la \in P^+_{2M-1}}\wt c_\la \sum_{\nu\leq\la,\,\nu\in P^+}c_{\nu}	\frac{1}{|F|}\int_{F}C_{\nu}(x)\, dx
\end{equation}
and 
\begin{equation}\label{konec2}
\frac{1}{cM^n}\sum_{x\in F_M}\ep(x)p(X(x))= \sum_{\la \in P^+_{2M-1}}\wt c_\la \sum_{\nu\leq\la,\,\nu\in P^+}c_{\nu}	\frac{1}{cM^n}\sum_{x\in F_M}\ep(x)C_{\nu}(x).
\end{equation}
Since Proposition \ref{lanu} states that for all $\nu\leq \la$ it holds that $|\nu|_m\leq |\la|_m\leq 2M-1$, we connect equations \eqref{konec1} and \eqref{konec2} by Proposition \ref{con}.
\end{proof}
Note that for practical purposes it may be more convenient to use the cubature formula \eqref{gcub} in its less developed form resulting from \eqref{undev},
\begin{equation}\label{gcub2}
\int_{\Omega}p(y)K^{-\frac{1}{2}}(y)\, dy=\frac{\kappa}{c|W|}\left(\frac{2\pi}{M}\right)^n\sum_{x\in F_M}\ep(x)p(X(x)) \,.\end{equation}
This form may be more practical since  the explicit inverse transform to $X_M$ is usually not available and, on the contrary, the calculation of the coefficients $\ep(x)$ and the points $X(x),\, x\in F_M$ is straightforward.

\section{Cubature formulas of rank two }\label{examplecub}\ 

In this section we specialize the cubature formula for the irreducible root systems of rank two. Let us firstly recall  some basic facts about root systems of rank $2$, i.e., $A_2,C_2$ and $G_2$. They are characterized by two simple roots $\Delta= (\alpha_1,\alpha_2)$ which satisfy
\begin{equation}
\begin{alignedat}{4}
A_2:&\quad&\langle\alpha_1,\alpha_1\rangle=2\,,&\quad&\langle\alpha_2,\alpha_2\rangle=2\,,&\quad&\langle\alpha_1,\alpha_2\rangle=-1\\\
C_2:&\quad&\langle\alpha_1,\alpha_1\rangle=1\,,&\quad&\langle\alpha_2,\alpha_2\rangle=2\,,&\quad&\langle\alpha_1,\alpha_2\rangle=-1\\
G_2:&\quad&\langle\alpha_1,\alpha_1\rangle=2\,,&\quad&\langle\alpha_2,\alpha_2\rangle=\frac23\,,&\quad&\langle\alpha_1,\alpha_2\rangle=-1\,.
\end{alignedat}
\end{equation}  
The transformation rules among the root system and the remaining three bases is given as follows,
\begin{equation}
\begin{alignedat}{9}
A_2:&\quad&\alpha_1&=&\alpha_1^\vee=2\omega_1-\omega_2\,,&
\quad&\alpha_2&=&\alpha^\vee_2=-\omega_1+2\omega_2\,,
&\quad\quad&\omega_1&=&\omega_1^\vee\,,
&\quad&\omega_2&=&\omega_2^\vee\,\\
C_2:&\quad&\alpha_1&=&\frac12\alpha_1^\vee=2\omega_1-\omega_2\,,&
\quad&\alpha_2&=&\alpha^\vee_2=-2\omega_1+2\omega_2\,,
&\quad\quad&\omega_1&=&\frac12\omega^\vee_1\,,
&\quad&\omega_2&=&\omega_2^\vee\,\\
G_2:&\quad&\alpha_1&=&\alpha^\vee_1=2\omega_1-3\omega_2\,,
&\quad&\alpha_2&=&\frac13\alpha_2^\vee=-\omega_1+2\omega_2\,,
&\quad\quad&\omega_1&=&\omega_1^\vee\,,
&\quad&\omega_2&=&\frac13\omega_2^\vee\,.
\end{alignedat}
\end{equation}
Taking the weights in the standard form $\lambda=(\lambda_1,\lambda_2)=\lambda_1\omega_1+\lambda_2\omega_2$, the corresponding Weyl group is generated by reflections $r_1$ and $r_2$ of the explicit form
\begin{equation}
\begin{alignedat}{5}
A_2:&\quad&r_1(\lambda_1,\lambda_2)&=&(-\lambda_1,\lambda_1+\lambda_2)\,,&\quad& r_2(\lambda_1,\lambda_2)&=&(\lambda_1+\lambda_2,-\lambda_2)\,\\
C_2:&\quad&r_1(\lambda_1,\lambda_2)&=&(-\lambda_1,\lambda_1+\lambda_2)\,,&\quad& r_2(\lambda_1,\lambda_2)&=&(\lambda_1+2\lambda_2,-\lambda_2)\,\\
G_2:&\quad&r_1(\lambda_1,\lambda_2)&=&(-\lambda_1,3\lambda_1+\lambda_2)\,,&\quad& r_2(\lambda_1,\lambda_2)&=&(\lambda_1+\lambda_2,-\lambda_2)\,.
\end{alignedat}
\end{equation}
Any Weyl group orbit of a generic point $\lambda\in P$ consists of 
\begin{equation}\label{orbit}
\begin{aligned}
A_2:\quad&\{(\lambda_1,\lambda_2)\,,(-\lambda_1,\lambda_1+\lambda_2)\,,(\lambda_1+\lambda_2,-\lambda_2)\,,(-\lambda_2,-\lambda_1)\,,(-\lambda_1-\lambda_2,\lambda_1)\,,(\lambda_2,-\lambda_1-\lambda_2)\}\,;\\
C_2:\quad&\{\pm(\lambda_1,\lambda_2)\,,\pm(-\lambda_1,\lambda_1+\lambda_2)\,,\pm(\lambda_1+2\lambda_2,-\lambda_2)\,,\pm(\lambda_1+2\lambda_2,-\lambda_1-\lambda_2)\}\,;\\
G_2:\quad&\{\pm(\lambda_1,\lambda_2)\,,\pm(-\lambda_1,3\lambda_1+\lambda_2)\,,\pm(\lambda_1+\lambda_2,-\lambda_2)\,,\pm(2\lambda_1+\lambda_2,-3\lambda_1-\lambda_2)\,,\\ &\pm(-\lambda_1-\lambda_2,3\lambda_1+2\lambda_2)\,,\pm(-2\lambda_1-\lambda_2,3\lambda_1+2\lambda_2)\}\,.
\end{aligned}
\end{equation}

\subsection{The case $A_2$}\

If the points are considered in the $\al^\vee-$basis, $x=a_1\alpha_1^\vee+a_2\alpha_2^\vee$, the symmetric  $C-$functions \eqref{orbit} and antisymmetric $S-$functions of $A_2$ are explicitly given by
\begin{equation*}
\begin{aligned}
C_{(\lambda_1,\lambda_2)}(a_1,a_2)&=\frac{1}{h_\lambda}\left(e^{2\pi i(\lambda_1a_1+\lambda_2a_2)}+e^{2\pi i(-\lambda_1a_1+(\lambda_1+\lambda_2)a_2)}+e^{2\pi i((\lambda_1+\lambda_2)a_1-\lambda_2a_2)}\right.\\ &\left.+e^{2\pi i(\lambda_2a_1-(\lambda_1+\lambda_2)a_2)}+e^{2\pi i((-\lambda_1-\lambda_2)a_1+\lambda_1a_2)}+e^{2\pi i(-\lambda_2a_1-\lambda_1a_2)}\right)\,,\\
S_{(\lambda_1,\lambda_2)}(a_1,a_2)&=e^{2\pi i(\lambda_1a_1+\lambda_2a_2)}-e^{2\pi i(-\lambda_1a_1+(\lambda_1+\lambda_2)a_2)}-e^{2\pi i((\lambda_1+\lambda_2)a_1-\lambda_2a_2)}\\ &+e^{2\pi i(\lambda_2a_1-(\lambda_1+\lambda_2)a_2)}+e^{2\pi i((-\lambda_1-\lambda_2)a_1+\lambda_1a_2)}-e^{2\pi i(-\lambda_2a_1-\lambda_1a_2)}\,,
\end{aligned}
\end{equation*} 
where the values $h_\lambda$ can be found in Table \ref{hl}.
\begin{table}
\begin{tabular}{c|ccc}
&&$h_\lambda$&\\\hline
$\lambda\in P^+$&$A_2$&$C_2$&$G_2$\\
\hline\hline
$(0,0)$&6&8&12\\
$(\star,0)$&2&2&2\\
$(0,\star)$&2&2&2\\
$(\star,\star)$&1&1&1\\
\end{tabular}
\medskip
\caption{The values of $h_\lambda$ are shown for $A_2,C_2$ and $G_2$ with $\star$ denoting the corresponding coordinate different from $0$.}
\label{hl}
\end{table}
Performing the transform \eqref{var2}, the resulting real-valued functions $X_1,\,X_2$ are given by
\begin{equation*}
X_1(a_1,a_2)=\cos{2\pi a_1}+\cos{2\pi a_2}+\cos{2\pi(a_1-a_2)}\,,\quad X_2(a_1,a_2)=\sin{2\pi a_1}-\sin{2\pi a_2}-\sin{2\pi(a_1-a_2)}\,.
\end{equation*}
The integral domain $\Omega$ can be described explicitly as
\begin{equation*}
\Omega=\set{(y_1,y_2)\in\R^2}{-(y_1^2+y_2^2+9)^2+8(y_1^3-3y_1y_2^2)+108\geq0}
\end{equation*}
The weight $K-$polynomial \eqref{KK} is given explicitly as
\begin{equation*}
K(y_1,y_2)=-(y_1^2 + y_2^2 + 9)^2 + 8(y_1^3 - 3 y_1y_2^2) + 108\,.
\end{equation*}
The index set which will label the sets of points $F_M$ and $\Omega_M$ is introduced via
\begin{align*}
I_M=\set{[s_0,s_1,s_2]\in(\Z^{\ge 0})^3}{s_0+ s_1+ s_2=M }.
\end{align*}
Thus the grid $F_M$  consists of the points
\begin{equation*}
F_M=\set{\frac{s_1}{M}\omega_1^\vee+\frac{s_2}{M}\omega_2^\vee}{ [s_0,s_1,s_2]\in I_M}\,.
\end{equation*}
If for $j=[s_0,s_1,s_2]\in I_M$ we denote 
\begin{equation*}
(y_1^{(j)}, 	y_2^{(j)})= \left(X_1\left(\frac{2s_1+s_2}{3M},\frac{s_1+2s_2}{3M}\right),X_2\left(\frac{2s_1+s_2}{3M},\frac{s_1+2s_2}{3M}\right)\right)
\end{equation*}
then the set of nodes $\Omega_M$ consists of the points
\begin{equation*}
\Omega_M=\setb{(y_1^{(j)}, 	y_2^{(j)})\in \R^2}{ j\in I_M}\,.
\end{equation*}
The integration domain $\Omega$ is together with the set of nodes $\Omega_{15}$ depicted in Figure \ref{A2}.
\begin{figure}
\includegraphics[width=0.45\textwidth]{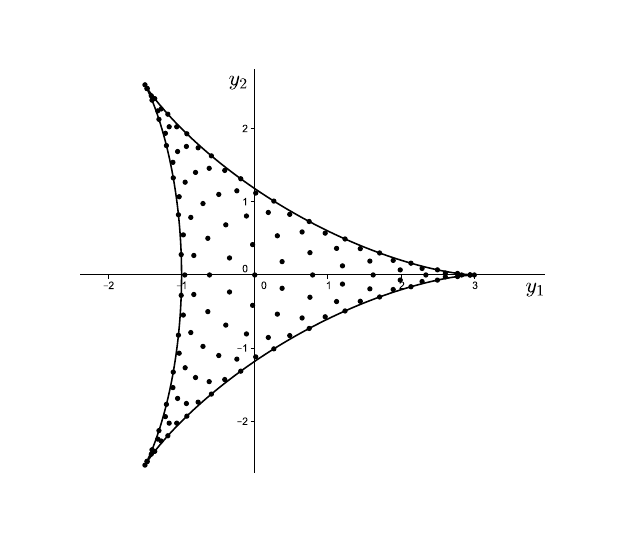}
\caption{The region $\Omega$ of $A_2$ together with the points of $\Omega_{15}$. The boundary of  $\Omega$ is defined by the equation $K(y_1,y_2)=0$.}
\label{A2} 
\end{figure}
Each point of $F_M$ as well as of $\Omega_M$ is labeled by the index set $ I_M$ and it is convenient for the point $x\in F_M$ and its image in $\Omega_M$ labeled by $j\in I_M$  to denote
$$ \ep_j =\ep(x) = \wt \ep (X_M (x)).  $$ 
The values of $\ep_j$ can be found in Table \ref{eps}. The cubature rule for any $p\in\Pi_{2M-1}$ is of the form
\begin{equation}\label{cuba2}
\int_{\Omega} p(y_1,y_2) K^{-\frac{1}{2}}(y_1,y_2)\,dy_1\,dy_2=\frac{\pi^2}{9M^2} \sum_{j\in I_M} \ep_j p(y_1^{(j)}, 	y_2^{(j)}).
\end{equation}

The cubature rule \eqref{cuba2} is an analogue of the formula deduced in \cite{XuA2} using the generalized cosine functions $TC_k$ and generalized sine  functions $TS_k$ defined by 
\begin{equation*}
\begin{aligned}
TC_k(t)&=\frac13\left[e^{\frac{i\pi}{3}(k_2-k_3)(t_2-t_3)}\cos{k_1\pi t_1}+e^{\frac{i\pi}{3}(k_2-k_3)(t_3-t_1)}\cos{k_1\pi t_2}+e^{\frac{i\pi}{3}(k_2-k_3)(t_1-t_2)}\cos{k_1\pi t_3}\right]\,,\\
TS_k(t)&=\frac13\left[e^{\frac{i\pi}{3}(k_2-k_3)(t_2-t_3)}\sin{k_1\pi t_1}+e^{\frac{i\pi}{3}(k_2-k_3)(t_3-t_1)}\sin{k_1\pi t_2}+e^{\frac{i\pi}{3}(k_2-k_3)(t_1-t_2)}\sin{k_1\pi t_3}\right]\,,
\end{aligned}
\end{equation*}
where $t=(t_1,t_2,t_3)\in \R^3$ with $t_1+t_2+t_3=0$ and $k=(k_1,k_2,k_3)\in\Z^3$ with $k_1+k_2+k_3=0$. 
It can be shown by performing the following change of variables and parameters
\begin{align*}&t_1=2a_1-a_2\,,\quad t_2=-a_1+2a_2\,,\quad t_3=-a_1-a_2\,,\\
&k_1=\lambda_1\,,\quad k_2=\lambda_2\,,\quad k_3=-\lambda_1-\lambda_2\,,\end{align*}
that generalized cosine and sine functions actually coincide (up to scalar multiplication) with the symmetric $C$-functions and antisymmetric $S$-functions of $A_2$. More precisely, we obtain
\begin{equation*}
TC_k(t)=\frac{h_\lambda}{6}C_\lambda(x)\,,\qquad TS_k(t)=\frac{1}{6}S_\lambda(t)\,.
\end{equation*}

\begin{example}\label{exa2}
The cubature formula \eqref{cuba2} is the exact equality of a weighted integral of any polynomial function of $m$-degree up to $M$ with a weighted sum of finite number of polynomial values. It can be used in numerical integration to approximate a weighted integral of any function by finite summing. 

If we choose the function $f(y_1,y_2)=K^{\frac12}(y_1,y_2)$ 
as our test function, then we can estimate the integral of $1$ over $\Omega$ 
$$\int_{\Omega}f(y_1,y_2)K^{-\frac12}(y_1,y_2) \,dy_1\,dy_2=\int_{\Omega}1 \,dy_1\,dy_2$$
by finite weighted sums with different $M$'s and compare the obtained results with the exact value of the integral of $1$ which is $2\pi\doteq 6.2832$. Table \ref{inta2} shows the values of the finite weighted sums for $M=10,20,30,50,100$.
\end{example}

\subsection{The case $C_2$}\

If the points are considered in the $\al^\vee-$basis, $x=a_1\alpha_1^\vee+a_2\alpha_2^\vee$, the symmetric  $C-$functions \eqref{orbit} and antisymmetric $S-$functions of $C_2$ are explicitly given by
\begin{equation*}
\begin{aligned}
C_{(\lambda_1,\lambda_2)}(a_1,a_2)&=\frac{2}{h_\lambda}\left(\cos{2\pi (\lambda_1 a_1+\lambda_2a_2)}+\cos{2\pi (-\lambda_1a_1+(\lambda_1+\lambda_2)a_2)}\right.\\ &\left.+\cos{2\pi ((\lambda_1+2\lambda_2)a_1-\lambda_2a_2)}+\cos{2\pi ((\lambda_1+2\lambda_2)a_1-(\lambda_1+\lambda_2)a_2)}\right)\,,\\
S_{(\lambda_1,\lambda_2)}(a_1,a_2)&=2\left(\cos{2\pi (\lambda_1a_1+\lambda_2a_2)}-\cos{2\pi (-\lambda_1a_1+(\lambda_1+\lambda_2)a_2)}\right.\\ &\left.-\cos{2\pi ((\lambda_1+2\lambda_2)a_1-\lambda_2a_2)}+\cos{2\pi ((\lambda_1+2\lambda_2)a_1-(\lambda_1+\lambda_2)a_2)}\right)\,.
\end{aligned}
\end{equation*} 
Performing the transform \eqref{var1}, the resulting real-valued functions $X_1,\,X_2$ are given by
\begin{equation*}
X_1=2(\cos{2\pi a_1}+\cos{2\pi(a_1-a_2)})\,,\quad X_2=2(\cos{2\pi a_2}+\cos{2\pi (2a_1-a_2)})\,.
\end{equation*}
The integral domain $\Omega$ can be described explicitly as
\begin{equation*}
\Omega=\set{(y_1,y_2)\in\R^2}{-2y_1-4\leq y_2\,,\; 2y_1-4\leq y_2\,,\; \frac14y_1^2\geq y_2}
\end{equation*}
The weight $K$-polynomial \eqref{KK} is given explicitly as
\begin{equation*}
K(y_1,y_2)=(y_1^2-4y_2)((y_2+4)^2-4y_1^2)\,.
\end{equation*}
The index set which will label the sets of points $F_M$ and $\Omega_M$ is introduced via
$$I_M=\set{[s_0,s_1,s_2]\in(\Z^{\geq0})^3}{s_0+2s_1+s_2=M}\,.$$
Thus the grid $F_M$ consists of the points
\begin{equation*}
F_M=\set{\frac{s_1}{M}\omega_1^\vee+\frac{s_2}{M}\omega_2^\vee}{[s_0,s_1,s_2]\in I_M}\,.
\end{equation*}
If for $j=[s_0,s_1,s_2]\in I_M$ we denote 
\begin{equation*}
(y_1^{(j)}, 	y_2^{(j)})= \left(X_1\left(\frac{2s_1+s_2}{2M},\frac{s_1+s_2}{M}\right),X_2\left(\frac{2s_1+s_2}{2M},\frac{s_1+s_2}{M}\right)\right)
\end{equation*}
then the set of nodes $\Omega_M$ consists of the points
\begin{equation*}
\Omega_M=\setb{(y_1^{(j)}, 	y_2^{(j)})\in \R^2}{ j\in I_M}\,.
\end{equation*}
The integration domain $\Omega$ is together with the set of nodes $\Omega_{15}$ depicted in Figure \ref{C2}. 
\begin{figure}
\includegraphics[width=0.45\textwidth]{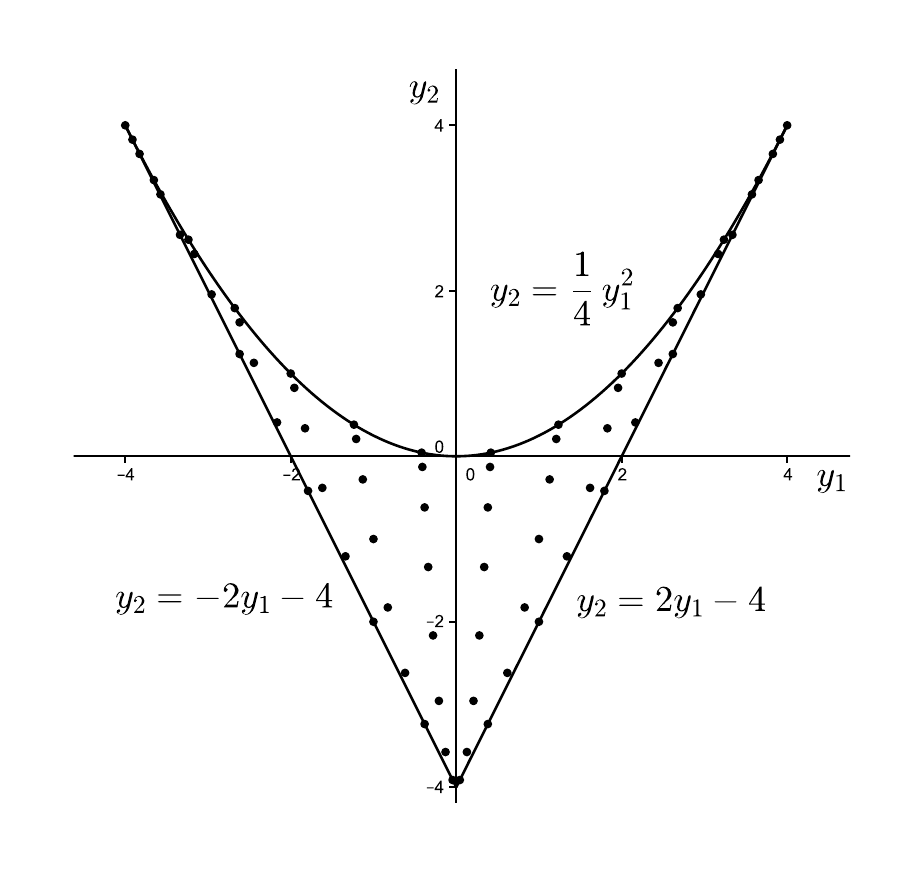}
\caption{The region $\Omega$ of $C_2$ together with with the points $\Omega_{15}$. The boundary is described  three equations $y_2=-2y_1-4$, $y_2=2y_1-4$ and $y_2=\frac14 y_1^2$.}
\label{C2} 
\end{figure}
Similarly to the case $A_2$, each point of $F_M$ as well as of $\Omega_M$ is labeled by the index set $I_M$ and it is convenient for the point $x\in F_M$ and its image in $\Omega_M$ labeled by $j\in I_M$ to denote
$$\varepsilon_j=\varepsilon(x)=\tilde\varepsilon(X_M(x))\,.$$
The values of $\varepsilon_j$ can be found in Table \ref{eps}.
\begin{table}
\begin{tabular}{c|ccc}
&&$\varepsilon_j$&\\\hline
$j\in I_M$&$A_2$&$C_2$&$G_2$\\
\hline\hline
$[\star,0,0]$&1&1&1\\
$[0,\star,0]$&1&2&3\\
$[0,0,\star]$&1&1&2\\
$[\star,\star,0]$&3&4&6\\
$[\star,0,\star]$&3&4&6\\
$[\star,\star,0]$&3&4&6\\
$[\star,\star,\star]$&6&8&12
\end{tabular}
\medskip
\caption{The values of $\varepsilon_j$ are shown for $A_2,C_2$ and $G_2$ with $\star$ denoting the corresponding coordinate different from $0$.}
\label{eps}
\end{table}
The cubature rule for any $p\in\Pi_{2M-1}$ takes the form
\begin{equation}\label{cubc2}
\int_{\Omega} p(y_1,y_2)K^{-\frac12}(y_1,y_2)dy_1\,dy_2=\frac{\pi^2}{4M^2}\sum_{j\in I_M}\varepsilon_j p(y_1^{(j)},y_2^{(j)}))\,.
\end{equation}

\begin{example}
The cubature formula \eqref{cubc2} is the exact equality of a weighted integral of any polynomial function of $m$-degree up to $M$ with a weighted sum of finite number of polynomial values. It can be used in numerical integration to approximate a weighted integral of any function by finite summing. 

Similarly to Example \ref{exa2}, if we choose the function $f(y_1,y_2)=K^{\frac12}(y_1,y_2)$ 
as our test function, then we can estimate the integral of $1$ over $\Omega$ 
$$\int_{\Omega}f(y_1,y_2)K^{-\frac12}(y_1,y_2) \,dy_1\,dy_2=\int_{\Omega}1 \,dy_1\,dy_2$$
by finite weighted sums with different $M$'s and compare the obtained results with the exact value of the integral of $1$ which is $\frac{32}{3}=10,666\bar{6}$. Table \ref{inta2} shows the values of the finite weighted sums for $M=10,20,30,50,100$. 
\end{example}
\subsection{The case $G_2$}\

If the points are considered in the $\al^\vee-$basis, $x=a_1\alpha_1^\vee+a_2\alpha_2^\vee$, the symmetric  $C-$functions \eqref{orbit} and antisymmetric $S-$functions of $G_2$ are explicitly given by
\begin{equation*}
\begin{aligned}
C_{(\lambda_1,\lambda_2)}(a_1,a_2)&=\frac{2}{|h_\lambda}\left(\cos{2\pi (\lambda_1a_1+\lambda_2a_2)}+\cos{2\pi (-\lambda_1a_1+(3\lambda_1+\lambda_2)a_2)}\right.\\ &\left.+\cos{2\pi ((\lambda_1+\lambda_2)a_1-\lambda_2a_2)}+\cos{2\pi ((2\lambda_1+\lambda_2)a_1-(3\lambda_1+\lambda_2)a_2)}
\right.\\ &\left.+\cos{2\pi ((-\lambda_1-\lambda_2)a_1+(3\lambda_1+2\lambda_2)a_2)}+\cos{2\pi ((-2\lambda_1-\lambda_2)a_1+(3\lambda_1+2\lambda_2)a_2)}\right)\,,\\
S_{(\lambda_1,\lambda_2)}(a_1,a_2)&=2\left(\cos{2\pi (\lambda_1a_1+\lambda_2a_2)}-\cos{2\pi (-\lambda_1a_1+(3\lambda_1+\lambda_2)a_2)}\right.\\ &\left.-\cos{2\pi ((\lambda_1+\lambda_2)a_1-\lambda_2a_2)}+\cos{2\pi ((2\lambda_1+\lambda_2)a_1-(3\lambda_1+\lambda_2)a_2)}
\right.\\ &\left.+\cos{2\pi ((-\lambda_1-\lambda_2)a_1+(3\lambda_1+2\lambda_2)a_2)}-\cos{2\pi ((-2\lambda_1-\lambda_2)a_1+(3\lambda_1+2\lambda_2)a_2)}\right)\,.
\end{aligned}
\end{equation*} 
Performing the transform \eqref{var1}, the resulting real-valued functions $X_1,\,X_2$ are given by
\begin{equation*}
\begin{aligned}
X_1&=2(\cos{2\pi a_1}+\cos{2\pi(a_1-3a_2)}+\cos{2\pi(2a_1-3a_2)})\,,\\ X_2&=2(\cos{2\pi a_2}+\cos{2\pi (a_1-a_2)}+\cos{2\pi(a_1-2a_2)})\,.
\end{aligned}
\end{equation*}
The integral domain $\Omega$ can be described explicitly as
\begin{equation*}
\Omega=\set{(y_1,y_2)\in\R^2}{-2((y_2+3)^{\frac32}+3y_2+6)\leq y_1\leq 2((y_2+3)^\frac32-3y_2-6),y_1\geq \frac14 y_2^2-3}\,.
\end{equation*}
The weight $K$-polynomial \eqref{KK} is given explicitly as
\begin{equation*}
K(y_1,y_2)=(y_2^2-4y_1-12)(y_1^2-4y_2^3+12y_1y_2+24y_1+36y_2+36)\,.
\end{equation*}
The index set which will label the sets of points $F_M$ and $\Omega_M$ is introduced via
$$I_M=\set{[s_0,s_1,s_2]\in(\Z^{\geq0})^3}{s_0+2s_1+3s_2=M}\,.$$
Thus the grid $F_M$ consists of the points
\begin{equation*}
F_M=\set{\frac{s_1}{M}\omega_1^\vee+\frac{s_1}{M}\omega_2^\vee}{[s_0,s_1,s_2]\in I_M}
\end{equation*}
If for $j=[s_0,s_1,s_2]\in I_M$ we denote 
\begin{equation*}
(y_1^{(j)}, 	y_2^{(j)})= \left(X_1\left(\frac{2s_1+3s_2}{M},\frac{s_1+2s_2}{M}\right),X_2\left(\frac{2s_1+3s_2}{M},\frac{s_1+2s_2}{M}\right)\right)
\end{equation*}
then the set of nodes $\Omega_M$ consists of the points
\begin{equation*}
\Omega_M=\setb{(y_1^{(j)}, 	y_2^{(j)})\in \R^2}{ j\in I_M}\,.
\end{equation*}
The integration domain $\Omega$ is together with the set of nodes $\Omega_{15}$ depicted in Figure \ref{G2}.
\begin{figure}
\includegraphics[width=0.45\textwidth]{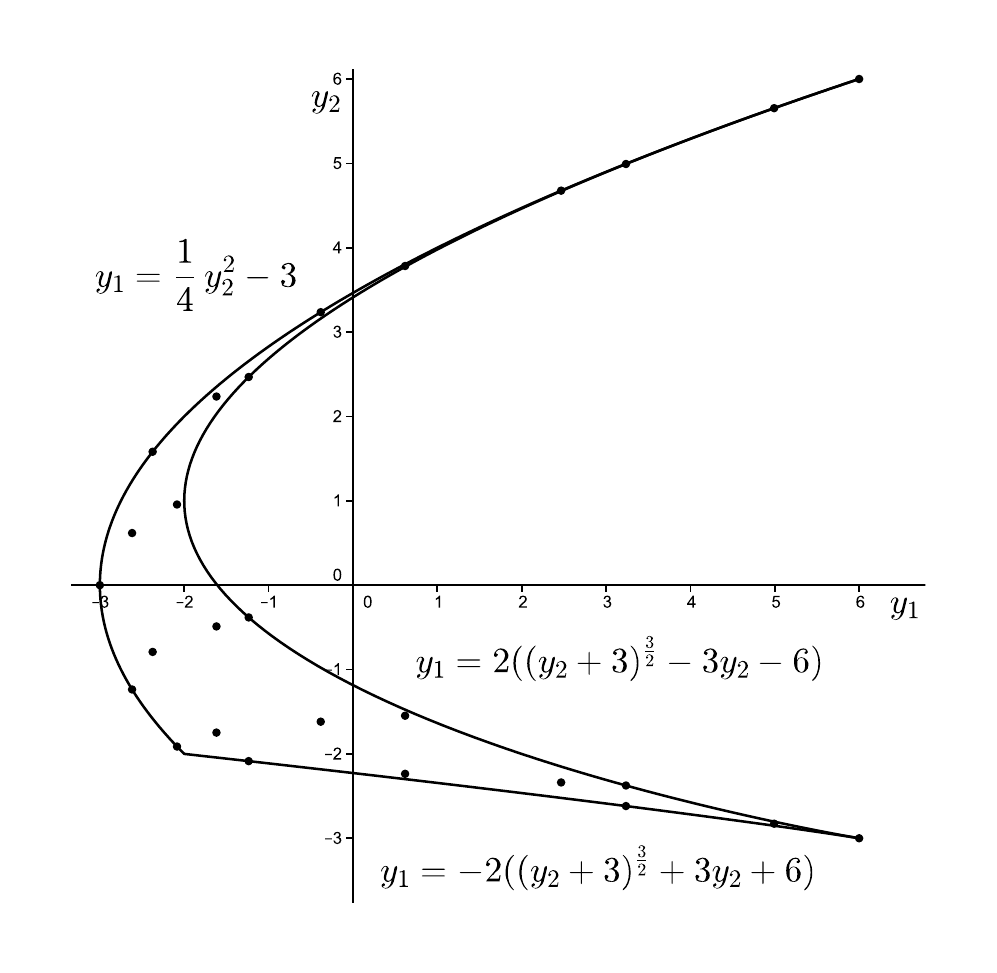}
\caption{The region $\Omega$ of $G_2$ together with the points of $\Omega_{15}$. The boundary of $\Omega$ is described  by three equations $y_1=\frac14 y_2^2-3$, $y_1=2((y_2+3)^\frac32-3y_2-6)$ and $y_1=-2((y_2+3)^\frac32+3y_2+6)$.}
\label{G2} 
\end{figure}
Similarly to the case $A_2$, each point of $F_M$ as well as of $\Omega_M$ is labeled by the index set $I_M$ and it is convenient for the point $x\in F_M$ and its image in $\Omega_M$ labeled by $j\in I_M$ to denote
$$\varepsilon_j=\varepsilon(x)=\tilde\varepsilon(X_M(x))\,.$$
The values of $\varepsilon_j$ can be found in Table \ref{eps}.
The cubature rule for any $p\in\Pi_{2M-1}$ is of the form 
\begin{equation}\label{cubg2}
\int_{\Omega} p(y_1,y_2)K^{-\frac12}(y_1,y_2)dy_1\,dy_2=\frac{\pi^2}{3M^2}\sum_{j\in I_M}\varepsilon_j p(y_1^{(j)},y_2^{(j)})\,.
\end{equation}

As in the case $A_2$, the cubature rule \eqref{cubg2} is similar to the Gauss-Lobatto cubature formula derived in \cite{XuG2}, where Xu et al study four types of functions $CC_k(t),SC_k(t),CS_k(t)$ and $SS_k(t)$ closely related to the orbit functions over Weyl groups. Concretely, the functions $CC_k(t)$ and $SS_k(t)$ are given by  
\begin{equation*}
\begin{aligned}
CC_k(t)&=\frac{1}{3}\left[\cos{\frac{\pi(k_1-k_3)(t_1-t_3)}{3}}\cos{\pi k_2 t_2}+\cos{\frac{\pi(k_1-k_3)(t_2-t_1)}{3}}\cos{\pi k_2 t_3}\right.\\&\left.+\cos{\frac{\pi(k_1-k_3)(t_3-t_2)}{3}}\cos{\pi k_2 t_1}\right]\,,\\
SS_k(t)&=\frac{1}{3}\left[\sin{\frac{\pi(k_1-k_3)(t_1-t_3)}{3}}\sin{\pi k_2 t_2}+\sin{\frac{\pi(k_1-k_3)(t_2-t_1)}{3}}\sin{\pi k_2 t_3}\right.\\&\left.+\sin{\frac{\pi(k_1-k_3)(t_3-t_2)}{3}}\sin{\pi k_2 t_1}\right]\,,
\end{aligned}
\end{equation*}
where the variable $t$ is given by homogeneous coordinates, i.e., $$t=(t_1,t_2,t_3)\in\R^3_H=\set{t\in \R^3}{t_1+t_2+t_3=0}$$ and parameter $k=(k_1,k_2,k_3)\in \Z^3\cap \R^3_H$. It can be verified that the linear transformations of variable and parameter
\begin{align*}&t_1=-a_1+3a_2\,,\quad t_2=2a_1-3a_2\,,\quad t_3=-a_1,\\
&k_1=\lambda_1+\lambda_2\,,\quad k_2=\lambda_1\,,\quad k_3=-2\lambda_1-\lambda_2\,,\label{par}\end{align*}
give the connection with $C-$functions and $S-$functions of  $G_2$: 
\begin{equation*}
C_\lambda(x)=\frac{12}{h_\lambda}CC_k(t)\,,\qquad S_\lambda(x)=-12SS_k(t)\,.\end{equation*} 

\begin{example}
The cubature formula \eqref{cubg2} is the exact equality of a weighted integral of any polynomial function of $m$-degree up to $M$ with a weighted sum of finite number of polynomial values. It can be used in numerical integration to approximate a weighted integral of any function by finite summing. 

Similarly to Example \ref{exa2}, if we choose the function $f(y_1,y_2)=K^{\frac12}(y_1,y_2)$ 
as our test function, then we can estimate the integral of $1$ over $\Omega$ 
$$\int_{\Omega}f(y_1,y_2)K^{-\frac12}(y_1,y_2) \,dy_1\,dy_2=\int_{\Omega}1 \,dy_1\,dy_2$$
by finite weighted sums with different $M$'s and compare the obtained results with the exact value of the integral of $1$ which is $\frac{128}{15}=8.533\bar{3}$. Table \ref{inta2} shows the values of the finite weighted sums for $M=10,20,30,50,100$. 

\begin{table}
\begin{tabular}{c|c|c|c|c|c}
$M$&$10$&$20$&$30$&$50$&$100$\\ \hline\hline
$A_2$&$6,0751$&$6,2314$&$6,2602$&$6,2749$&$6,2811$\\\hline
$C_2$&$10,056$&$10,5133$&$10,5985$&$10,6421$&$10,6605$\\\hline
$G_2$&$7,4789$&$8,2561$&$8,4092$&$8,4885$&$8,5221$
\end{tabular}
\bigskip
\caption{It shows the estimations of the integrals of $1$ over the regions $\Omega$ of $A_2,C_2$ and $G_2$ resp. by finite weighted sums of the right-hand side of \eqref{cuba2},\eqref{cubc2} and \eqref{cubg2} resp. for $M=10,20,30,50,100$.}
\label{inta2}
\end{table}
\end{example}

\section{Polynomial approximations}\label{secpol}
\subsection{The optimal polynomial approximation}\

Since the polynomial function \eqref{KK} is continuous and strictly positive in $\Omega^\circ$, its square root $K^{-\frac{1}{2}}$ can serve as a weight function for the weighted Hilbert space $\mathcal{L}_K^2(\Omega)$, i.e.
a space of complex-valued cosets of measurable functions $f$ such that $\int_{\Omega}|f|^2K^{-\frac{1}{2}}<\infty$ with an inner product defined by
\begin{equation}\label{sca}
(f,g)_K=\frac{1}{\kappa (2\pi)^n}\int_{\Omega}f(y)\overline{g(y)}K^{-\frac{1}{2}}(y)\,dy.
\end{equation}
Our aim is to consctruct a suitable Hilbert basis of $\mathcal{L}_K^2(\Omega)$.  Taking the inverse transforms  of \eqref{var1}, \eqref{var2} and substituting them into the polynomials \eqref{Cpoly} we obtain the polynomials $p_\la \in\C [y_1,\dots,y_n]$ such that
\begin{align}\label{CpolyX}
C_\la &= 	p_\la (X_1,\dots, X_n).
\end{align}
Moreover, successively applying Proposition \ref{subslem} to perform the substitutions \eqref{var2} in $p_\la$ and taking into account \eqref{degwtp} we obtain that 
\begin{equation}
	\mathrm{deg}_m \, p_\la=|\la|_m.
\end{equation}
Calculating the scalar product \eqref{sca} for the $p_\la$ polynomials \eqref{CpolyX}, we obtain that the continuous orthogonality of the $C-$functions \eqref{intor} is inherited, i.e.
\begin{equation}
	(p_\la,p_{\la'})_K=h_\la^{-1}\delta_{\la,\la'}, \q \la, \la' \in P^+ .
\end{equation}

Assigning to any function $f\in \mathcal{L}_K^2(\Omega)$ a function $\wt f\in \mathcal{L}^2(F)$ by the relation $\wt f (x)= f(X(x))$ and taking into account the expansion 
\eqref{expansion}
we obtain for its expansion coefficients $c_\la$ that
\begin{equation}\label{coefa}c_\lambda=\frac{h_\la}{|W||F|}\int_F \wt{f}(x)\overline{C_\lambda(x)}\,dx=\frac{h_\la}{\kappa (2\pi)^n}\int_{\Omega}f(y)\overline{p_\la(y)}K^{-\frac{1}{2}}(y)\,dy.
\end{equation}
Therefore any $f\in\mathcal{L}^2_K(\Omega)$ can be expanded in terms of $p_\lambda$,
\begin{equation}\label{exppol}
f=\sum_{\lambda\in P^+}a_\lambda p_\lambda\,,\q a_\lambda=h_\la\,(f,p_\lambda)_K\,
\end{equation}
and the set of $C-$polynomials $p_\la,\, \la \in P^+$ is a Hilbert basis of $ \mathcal{L}^2_K(\Omega)$.

To construct a basis of the space of multivariate polynomials $\Pi_M$ suffices to note that equation \eqref{mon} guarantess each monomial $Z_1^{\la_1}Z_2^{\la_2}\dots Z_n^{\la_n}$  can be expanded in terms of  $C_\la$ with $\la \in P^+_M$; the same can be said about the transformed monomials $X_1^{\la_1}X_2^{\la_2}\dots X_n^{\la_n}$.
Thus by the same argument as above we obtain for any $p\in \Pi_M$ the expansion 
\begin{equation}\label{pp}
p=\sum_{\la\in  P^+_M}b_\la p_\la\,,\q b_\lambda=h_\la\,(p,p_\lambda)_K.
\end{equation}
Truncating the series \eqref{exppol} to the finite set $P_M^+$ we obtain a polynomial approximation $u_M[f]\in \Pi_M$ of the functions  $f\in  \mathcal{L}^2_K(\Omega)$,
\begin{equation}\label{exppoltrunc}
u_{M}[f]=\sum_{\lambda\in P^+_M}a_\lambda p_\lambda\,,\q a_\lambda=h_\la\,(f,p_\lambda)_K.
\end{equation}

Relative to the  $\mathcal{L}^2_K(\Omega)$ norm is this approximation indeed optimal among all polynomials from  $\Pi_M$ as  states the following proposition.
\begin{tvr}
For any $f\in\mathcal{L}^2_K(\Omega)$ is the $u_{M}[f ]$ polynomial \eqref{exppoltrunc}
the best approximation of $f$, relative to the $\mathcal{L}^2_K(\Omega)$-norm, by any polynomial from $\Pi_M$.
\end{tvr}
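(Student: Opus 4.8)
The plan is to recognize $u_M[f]$ as the orthogonal projection of $f$ onto the finite-dimensional subspace $\Pi_M\subset\mathcal{L}^2_K(\Omega)$, and then invoke the classical fact that an orthogonal projection realizes the nearest point in the subspace. Everything needed is already in place: the inner product \eqref{sca}, the inherited orthogonality $(p_\la,p_{\la'})_K=h_\la^{-1}\delta_{\la,\la'}$, and the expansion \eqref{pp}.

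First I would record that the polynomials $\{p_\la\mid \la\in P^+_M\}$ form an orthogonal basis of $\Pi_M$: they span $\Pi_M$ by \eqref{pp}, and they are linearly independent, being nonzero and mutually orthogonal with respect to $(\cdot,\cdot)_K$. Hence $\Pi_M$ is a finite-dimensional, and therefore closed, subspace of the Hilbert space $\mathcal{L}^2_K(\Omega)$.

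The central step is to verify that the residual $f-u_M[f]$ is orthogonal to every element of $\Pi_M$. Since the $p_\mu$, $\mu\in P^+_M$, span $\Pi_M$, it suffices to test against each such $p_\mu$. Using \eqref{exppoltrunc}, the defining relation $a_\mu=h_\mu\,(f,p_\mu)_K$, and the orthogonality relation, I compute $(u_M[f],p_\mu)_K=\sum_{\la\in P^+_M}a_\la\,(p_\la,p_\mu)_K=a_\mu h_\mu^{-1}=(f,p_\mu)_K$, whence $(f-u_M[f],p_\mu)_K=0$. Thus $f-u_M[f]$ is orthogonal to all of $\Pi_M$.

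Finally, for an arbitrary $q\in\Pi_M$ I would decompose $f-q=(f-u_M[f])+(u_M[f]-q)$, observe that the first summand is orthogonal to $\Pi_M$ while the second lies in $\Pi_M$, and apply the Pythagorean identity to obtain $\norm{f-q}_K^2=\norm{f-u_M[f]}_K^2+\norm{u_M[f]-q}_K^2\geq\norm{f-u_M[f]}_K^2$, with equality precisely when $q=u_M[f]$. This yields the minimization and hence the asserted optimality. There is no serious obstacle: the argument is the standard projection theorem. The only point demanding care is confirming that the truncation coefficients $a_\la$ in \eqref{exppoltrunc} are exactly the orthogonal-projection coefficients, which is immediate once one notes that the $p_\la$ are orthogonal with $\norm{p_\la}_K^2=h_\la^{-1}$, so that the normalization $a_\la=h_\la\,(f,p_\la)_K$ is precisely the right one.
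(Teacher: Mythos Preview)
Your argument is correct and is essentially the same as the paper's: the paper carries out the identical Pythagorean decomposition by expanding $p\in\Pi_M$ as $\sum_{\la\in P^+_M}b_\la p_\la$ and computing directly that $(f-p,f-p)_K=(f-u_M[f],f-u_M[f])_K+\sum_{\la\in P^+_M}h_\la^{-1}|b_\la-a_\la|^2$, which is your orthogonality-plus-Pythagoras step written in coordinates. Your phrasing via the projection theorem is a touch more conceptual, but the content is the same.
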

\begin{proof}
Consider any $p\in \Pi_M$ a polynomial of the form \eqref{pp}, any  $f\in\mathcal{L}^2_K(\Omega)$ expanded by \eqref{exppol} and $u_{M}[f ]$ an approximation polynomial \eqref{exppoltrunc}. Then we calculate that
\begin{align*}
(f-p,f-p)_K & =(f,f)_K-(f,p)_K-(p,f)_K+(p,p)_K \\ 
&= (f,f)_K-\sum_{\lambda\in P^+_M}  h_\la^{-1}a_\lambda\overline{b_\lambda}-\sum_{\lambda\in P^+_M} h_\la^{-1}b_\lambda\overline{a_\lambda}+\sum_{\lambda\in P^+_M} h_\la^{-1} |b_\lambda|^2 \\
&= (f-u_{M}[f ],f-u_{M}[f ])_K+\sum_{\lambda\in P^+_M} h_\la^{-1}|b_\lambda-a_\lambda|^2\geq (f-u_{M}[f ],f-u_{M}[f ])_K\,.
\end{align*}
\end{proof}

\subsection{The cubature polynomial approximation}\

Rather than the optimal polynomial approximation \eqref{exppoltrunc} one may consider for practical applications its weakened version. Such a weaker version is obtained by using  the cubature formula for an approximate calculation of  $(f,p_\lambda)_K$, i.e. we set
\begin{equation}\label{exppoltruncnew}
v_{M}[f]=\sum_{\lambda\in P^+_M}a_\lambda p_\lambda\,,\q a_\lambda=\,\frac{h_\la}{c|W|M^n}\sum_{y\in \Omega_M}\wt\ep(y)f(y) \overline{ p_\la(y)}.
\end{equation}
Since for $f\in \Pi_{M-1}$ and $p_\la$, $\la \in P_M^+$ it holds that $f\overline{ p_\la}\in \Pi_{2M-1} $and the cubature formula is thus valid, we obtain that the optimal approximation coincides with $v_{M}[f]$, 
\begin{equation}
	v_{M}[f]= u_{M}[f],\q f\in \Pi_{M-1}.
\end{equation}

\begin{example}
As a specific example of a continuous model function in the case $C_2$, we consider 
$$f(y_1,y_2)=e^{-(y_1^2 + (y_2 + 1.8)^2)/(2\times0.35^2)}$$
defined on $\Omega$. 
The graph of $f$ together with its approximations $v_M[f]$ for $M=10,20,30$ is shown in Figure \ref{approx}. 
\begin{figure}
\includegraphics[scale=0.20]{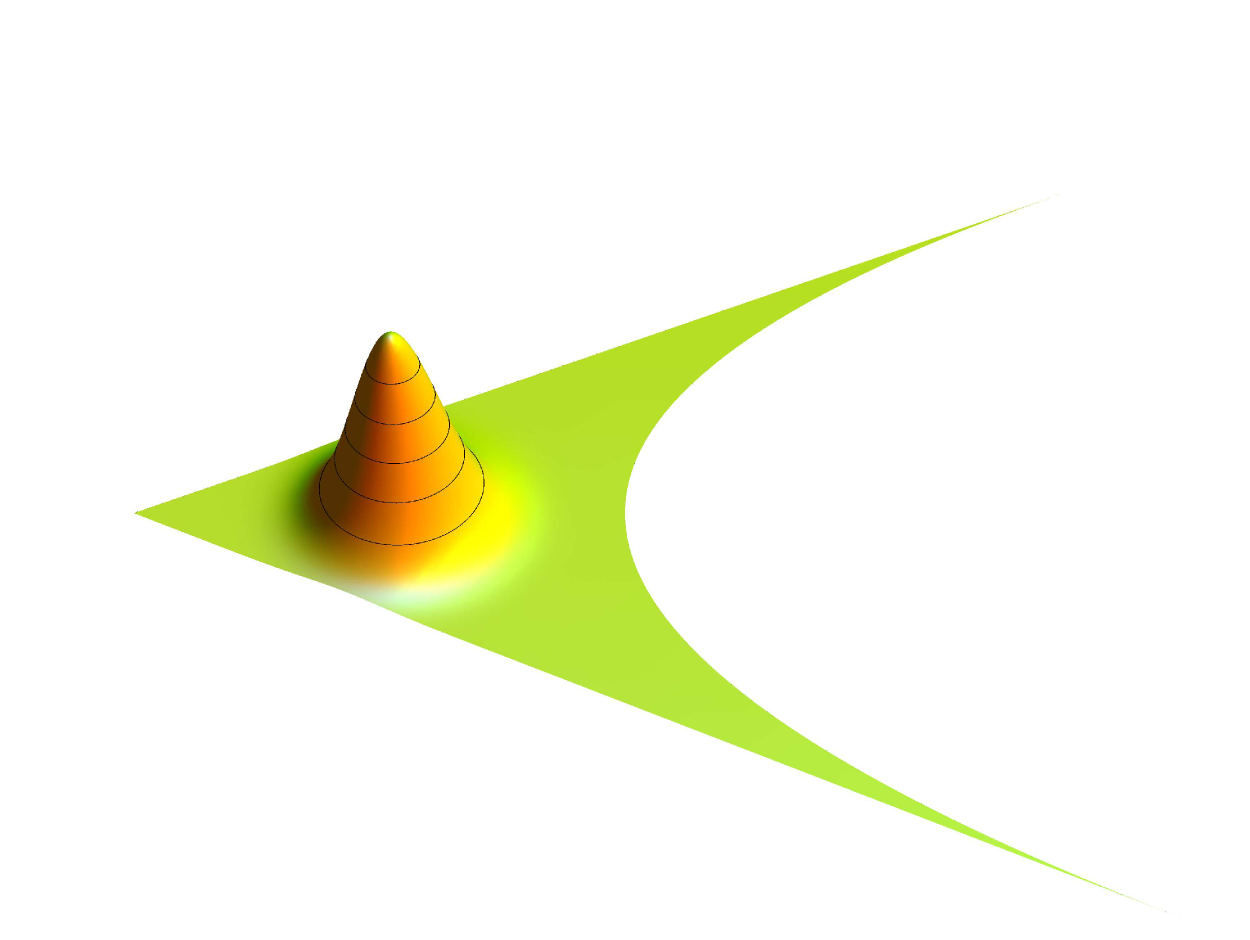}
\includegraphics[scale=0.20]{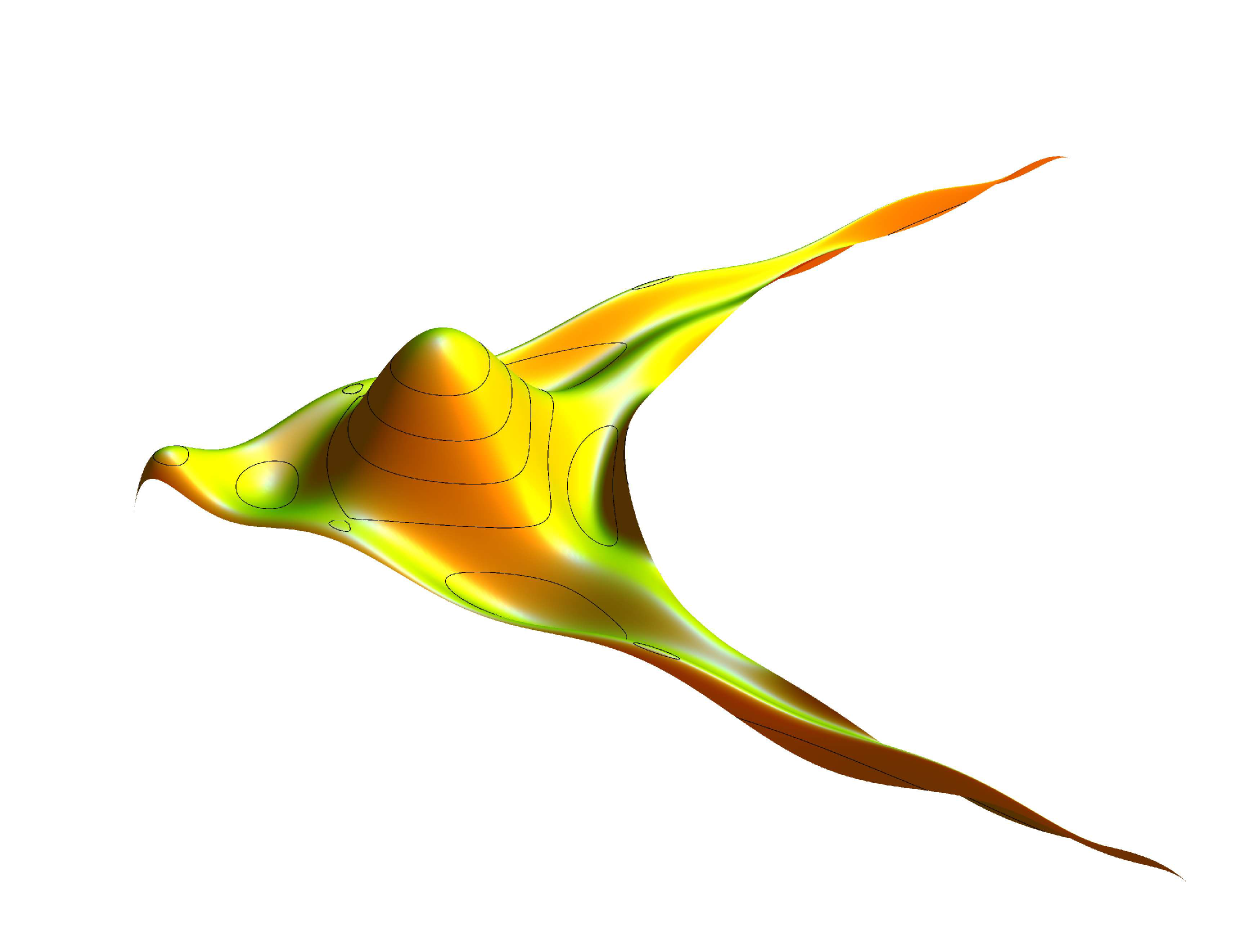}\\
\includegraphics[scale=0.20]{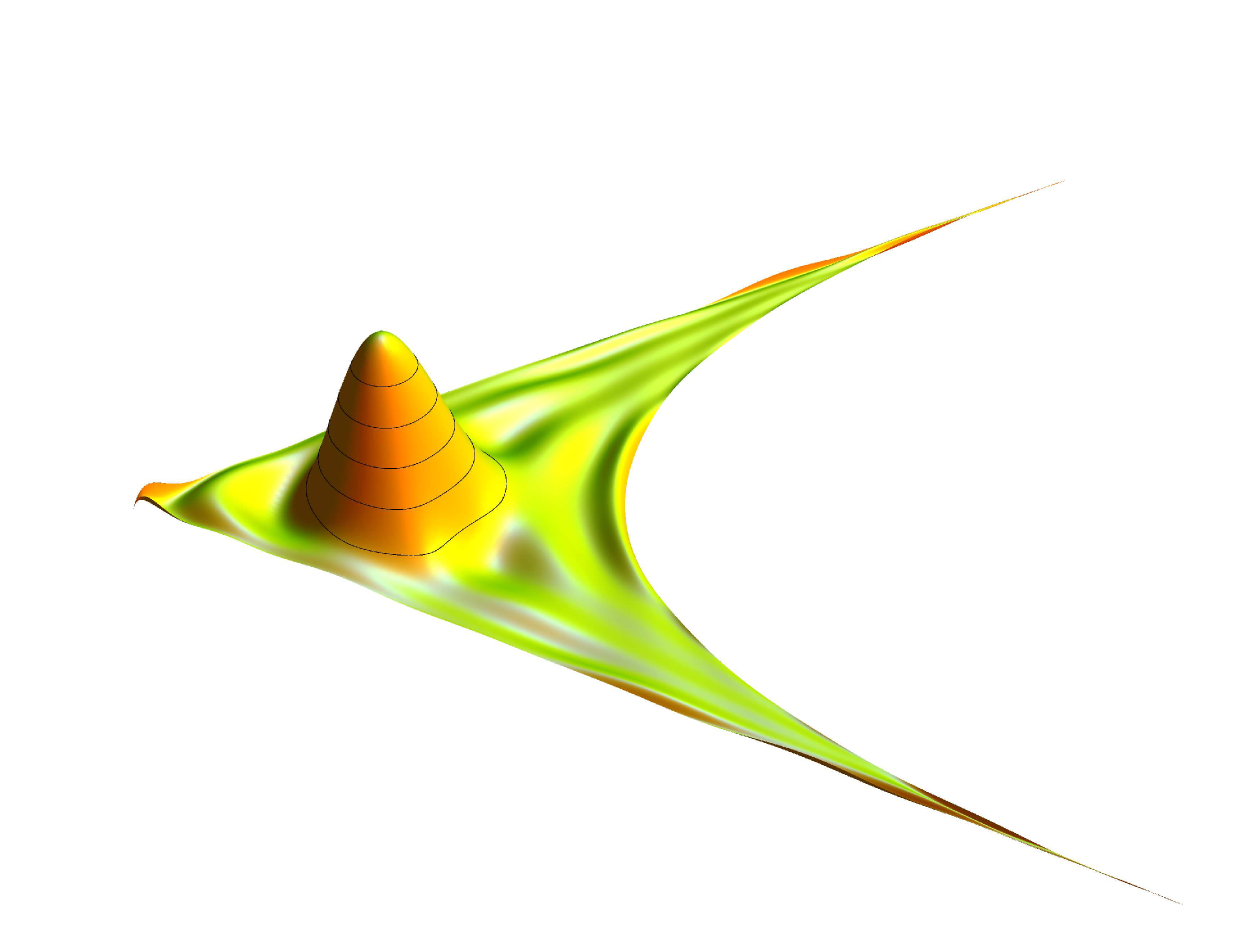}
\includegraphics[scale=0.20]{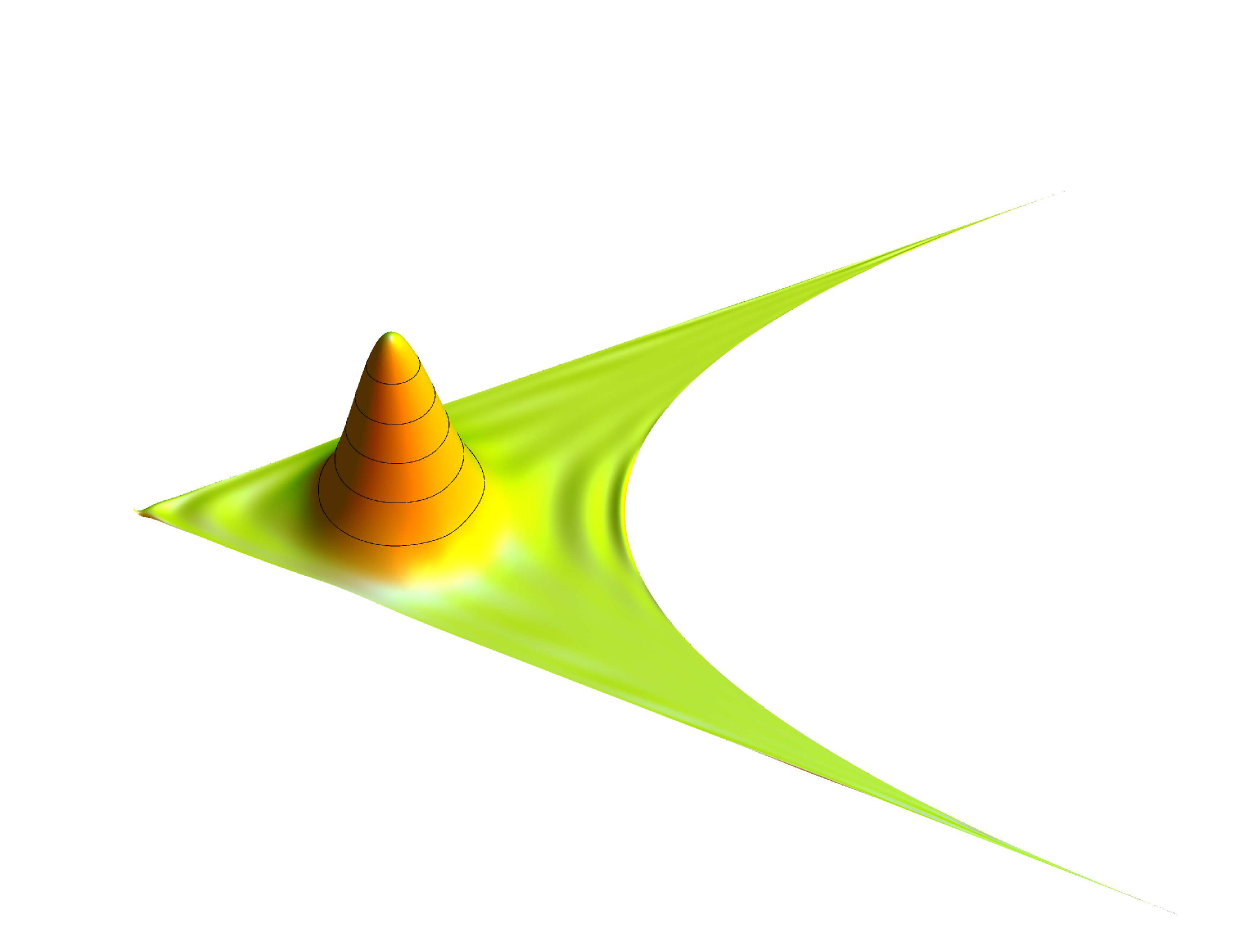}
\caption{The figure shows the model function $f$ and its approximations $v_M[f]$ for $M=10,20,30$ on $\Omega$ of $C_2$.}
\label{approx}
\end{figure}
Integral error estimates of the polynomial approximations $v_M[f]$
\begin{equation*}\int_{\Omega}|f(y_1,y_2)-v_M[f](y_1,y_2)|^2K^{-\frac12}(y_1,y_2)dy_1\,dy_2\end{equation*}
 can be found in Table \ref{errors}.
\begin{table}[h]
\begin{tabular}{c||c|c|c}
$M$&$10$&$20$&$30$\\ \hline
$\int_{\Omega}|f-v_M[f]|^2K^{-\frac12}dy_1\,dy_2$&$0,0636842$&$0,0035217$&$0,0000636$
\end{tabular}
\bigskip
\caption{The table shows the values of integral error estimates of the polynomial approximations $v_M[f]$ for $M=10,20,30$.}
\label{errors}
\end{table}
\end{example}

\section{Concluding remarks}

\begin{itemize}

\item Due to the generality of the present construction of the cubature formulas, some of the cases presented in this paper appeared already in the literature. The case $A_2$ is closely related to two-variable analogues of Jacobi polynomials on Steiner's hypocycloid \cite{koorn}; the case $C_2$ is related to two-variable analogues of Jacobi polynomials on a domain bounded by two lines and a parabola \cite{koorn,koorn1,koorn2} and the corresponding Gaussian cubature formulas induced by these polynomials are studied for example in \cite{XuC2}. The case $G_2$ and its cubature formulas are detailed in \cite{XuG2}.

\item The Chebyshev polynomials of the first kind induce the cubature formula of the maximal efficiency --- Gauss-Chebyshev quadrature \cite{Hand, Riv}. The nodes of this formula are $M$ roots of the Chebyshev polynomials of the first kind of degree $M$ and it exactly evaluates a weighted integral for any polynomial of degree at most $2M-1$. The set of nodes $\Omega_M$ of $A_1$ does not correspond to the set of roots of the Gauss-Chebyshev quadrature --- $\Omega_M$ of $A_1$ consists of $M+1$ points and includes two boundary points of the interval. Thus, the number of points $\Omega_M$ exceeds by one the minimum number of nodes and the resulting cubature formula is not optimal. This phenomenon, already observed for the $A_n$ sequence in \cite{LX}, generalizes to all cases of Weyl groups. Even though the developed formulas are slightly less efficient than the optimal ones, they contain points on the entire boundary of $\Omega_M$ --- this might be useful for some applications.

\item The technique for construction of the cubature formulas in this article is based on discrete orthogonality of $C-$functions over the set $F_M$. For the case $A_1$ the cosine transform corresponding to the discrete orthogonality of cosines is standardly known as discrete cosine transform of the type DCT-I \cite{Brit}. The $M$ roots of the Chebyshev polynomials of the first kind, which lead to the optimal cubature formula, enter the discrete orthogonality relations of the type DCT-II. This indicates that the discrete orthogonality relations of $C$-functions, which generalize the transforms of the type DCT-II, DCT-III and DCT-IV for admissible cases of Weyl groups \cite{CH},  might lead to cubature formulas of higher efficiency.

\item The weaker version of the polynomial approximation \eqref{exppoltrunc} assigns to any complex function $f: \Omega\map\C $ a polynomial functional series $\{
u_{M}[f]\}_{M=1}^\infty$.  
Existence of conditions for convergence of these functional series together with an estimate of the approximation error $\int \abs{f -u_{M}[f]  }^2K^{-\frac12}$ poses an open problem.

\item The Clenshaw-Curtis method for deriving cubature formulas is based on expressing a given function into a series of the corresponding set of orthogonal polynomials and then integrating the series term by term \cite{clenshaw,sloan}. The coefficients in the series are calculated using discrete orthogonality properties. The comparison of the resulting cubature formulas by the Clenshaw-Curtis technique and the method used in the present paper and in \cite{MP4,SsSlcub} deserves further study.

\item The polynomials of the orbit functions used in the present paper and in \cite{MP4,SsSlcub} to derive the cubature formulas are directly related to the Jacobi polynomials associated to root systems as well as to the Macdonald polynomials. The discrete orthogonality of the Macdonald polynomials with unitary parameters, achieved in \cite{DE}, opens a possibility of an extension of the current cubature formulas to the corresponding subset of the Macdonald polynomials. 

\end{itemize}

\section*{Acknowledgments}

This work is supported by the European Union under the project Support of
inter-sectoral mobility and quality enhancement of research teams at Czech Technical University in Prague CZ.1.07/2.3.00/30.0034, by the Natural Sciences and Engineering Research Council of Canada and by the Doppler Institute of the Czech Technical University in Prague. JH is also grateful for the hospitality extended to him at the Centre de recherches math\'ematiques, Universit\'e de Montr\'eal and gratefully acknowledge support by RVO68407700. LM would like to express her gratitude to the Department of Mathematics and Statistic at Universit\'e de Montr\'eal and the Institute de Sciences Math\'ematiques de Montr\'eal.

\end{document}